\journal{Journal of Algebra}
\newtheorem{theorem}{Theorem}[section]
\newtheorem{lemma}[theorem]{Lemma}
\newtheorem{prop}[theorem]{Proposition}
\newtheorem{corollary}[theorem]{Corollary}
\newenvironment{proof}{{\flushleft\it Proof.}}{\hfill$\square$ \\}
\newenvironment{remark}{{\flushleft\it Remark\ts.}}{\hfill$\square$ \\}
\numberwithin{equation}{section}
\def\beq{\begin{equation}}
\def\eeq{\end{equation}}
\def\beqs{\begin{equation*}}
\def\eeqs{\end{equation*}}
\def\A{\mathrm{A}}
\def\Ab{\ts\bar{\A}\ts}
\def\ad{\operatorname{ad}}
\def\al{G}
\def\Aut{\operatorname{Aut}}
\def\aw{\la}
\def\Bb{\mathrm{B}}
\def\Cbb{\mathbb{C}}
\def\CC{\mathbb{C}}
\def\comp{\ts{\scriptstyle\circ}\ts}
\def\d{\partial}
\def\D{{\mathcal D}}
\def\de{\delta}
\def\De{\Delta}
\def\Dep{\De^{\,\prime}}
\def\E{D}
\def\EE{\E^{\,\sign}}
\def\ev{\pi}
\def\evp{\pi^{\,\prime}}
\def\G{{\mathcal G}}
\def\GL{\operatorname{GL}}
\def\gl{\mathfrak{gl}}
\def\g{\mathfrak{g}}
\def\h{\mathfrak{h}}
\def\I{{\operatorname I}}
\def\Ib{{\bar\I}}
\def\Id{\operatorname{Id}}
\def\Img{\operatorname{Im}}
\def\Ipb{\Ib{}^{\,\prime}} 
\def\iso{A\ts}
\def\J{{\operatorname J}}
\def\Jb{{\bar \J}}
\def\Jpb{\Jb{}^{\,\prime}} 
\def\La{\Lambda}
\def\la{\lambda}
\def\lap{\bar\lambda}
\def\M{\Phi}
\def\Mlm{\M^{\ts\la}_{\ts\mu}}
\def\mup{\mu^{\ts\prime}}
\def\Nb{\bar{N}}
\def\nup{\bar\nu}
\def\om{\omega}
\def\n{\mathfrak{n}}
\def\Pc{\mathcal{P}}
\def\phi{\varphi}
\def\zec{\check{\zeta}_c}
\def\zeclm{{\check{\zeta}_{c,\mu}^\lambda}}
\def\rf#1{(\ref{#1})}
\def\S{\mathfrak{S}}
\def\si{\sigma}
\def\sign{\varepsilon}
\def\ts{\hskip.75pt}
\def\U{\mathrm{U}}
\def\Uh{\ts\bar{\U}(\h)\ts}
\def\vlm{v^{\ts\la}_{\ts\mu}}
\def\V{\operatorname{V}}
\def\xic{\xi}
\def\xiclm{U_a}
\def\Y{\mathrm{Y}}
\def\YY{\Y(\gl_n)}
\def\ZZ{\mathbb{Z}}
\def\zec{\eta}
\def\zeclm{V_a}
\begin{document}

\begin{frontmatter}

\title{Rational and polynomial representations of Yangians}
 
\author[a,b]{Sergey Khoroshkin\,}
\address[a]{Institute for Theoretical and Experimental Physics, 
Moscow 117259, Russia}
\address[b]{Laboratory of Mathematical Physics, Higher School of Economics,
Moscow 117312, Russia}

\author[c]{Maxim Nazarov\,}
\address[c]{Department of Mathematics,  University of York, 
York YO10 5DD, England}

\author[a,d]{Alexander Shapiro\,}
\address[d]{Department of Mathematics, University of California, 
Berkeley 94720, California}

\begin{keyword}
Yangians\sep
Zhelobenko operators
\end{keyword}

\begin{abstract}
We define natural classes of rational and polynomial
representations of the Yangian of the general linear Lie
algebra. We also present the classification and
explicit realizations of all irreducible rational
representations of the Yangian.
\end{abstract}

\end{frontmatter}

\thispagestyle{empty}


\bigskip
\setcounter{section}{-1}
\section{Introduction}


\noindent
In a recent series of works the first two
authors studied certain functors between categories of modules
of the complex general linear Lie algebra $\gl_m$ and of the
Yangian $\YY\,$; see \cite{KN1,KN2} and references therein.
Using the Howe duality \cite{H1,H2} between the Lie algebras
$\gl_m$ and $\gl_n\,$,
these functors arise from the centralizer construction
of the Yangian $\YY$ due to Olshanski \cite{O}.
They can also be defined as direct sums over $N=1,2,\ldots$
of the compositions of two well known functors.
The first functor in the composition, due to Cherednik \cite{C2},
is between the categories of modules
of $\gl_m$ and of the degenerate affine Hecke algebra
of $\gl_N\,$. The second functor in the composition, due to
Drinfeld \cite{D1},
is between the latter category and the category of
$\YY$-modules.

In the above mentioned series of works
the Zhelobenko operators \cite{Z} were used to study
intertwining operators between certain
$\YY$-modules.
These modules correspond to pairs of weights
$\la=(\la_1,\ldots,\la_m)$ and $\mu=(\mu_1,\ldots,\mu_m)$
of the Lie algebra $\gl_m$
subject to a condition that each difference
$\nu_a=\lambda_a-\mu_a$
is a non-negative integer not exceeding $n\,$.
In the present article we denote
by $\Mlm$ the corresponding $\YY$-module, see \eqref{starat}.
By definition, this module is a tensor product over
$a=1,\ldots,m$
of certain pullbacks of the fundamental modules
$\Lambda^{\nu_a}(\CC^n)$ of $\gl_n\,$.
Each pullback also depends on a complex number $\mu_a\,$,
while the tensor product is taken by using the comultiplication
\eqref{Delta} on $\YY\,$.
In particular, when $\nu_a=1$ this pullback is called
a \text{vector\/} $\YY$-module.
Note that in the present article we use notation different from that
of \cite{KN1,KN2}. Most significantly, here the weights $\la$ and $\mu$ 
of the $\Mlm$ correspond to the weights $\la+\rho$ and $\mu+\rho$ in
\cite{KN1,KN2} where $\rho=(0,-1\sco 1-m)\,$.

Regard the symmetric group $\S_m$ as the Weyl group of $\gl_m$
and let $\si_0\in\S_m$ be the longest element.
One of the principal results of \cite{KN2}
was a new proof of irreducibility of the image
of the canonical intertwining operator
\begin{equation}
\label{ilamu}
\Mlm\to\Phi_{\,\si_0(\mu)}^{\,\si_0(\lambda)}
\end{equation}
under the condition of \emph{dominance\/}
of the weight $\la$ of $\gl_m\,$.
This condition on $\la$ means that
\begin{equation}
\label{domcon}
\la_a-\la_b\neq-1,-2,\ldots
\quad\text{whenever}\quad
a<b\,.
\end{equation}
The study of these 
operators has been started by Cherednik \cite{C1}.
The proof of irreducibility in \cite{KN2}
was based on the results of \cite{KNV}.
Other proofs follow from the results of
Akasaka and Kashiwara~\cite{AK} and of Nazarov and Tarasov \cite{NT}.
Note that a connection between the intertwining operators for 
tensor products of $\YY$-modules
and the Zhelobenko operators has been first discovered 
by Tarasov and Varchenko \cite{TV}.

It was also demonstrated in \cite{KN2}
that up to an automorphism of the form 
\eqref{similar} of the algebra $\YY\,$,
any irreducible finite-dimensional
$\YY$-module arises as the image of some
intertwining operator \eqref{ilamu}.
Furthermore, for that particular purpose it suffices to
consider only the operators corresponding
to dominant weights $\la$ while $\mu$ satisfy the extra conditions
\begin{equation}
\label{mucon}
\mu_a-\mu_b\le0
\quad\text{whenever}\quad
\la_a=\la_b
\quad\text{and}\quad
a<b\,.
\end{equation}
For any dominant $\la$
the canonical intertwining operator \eqref{ilamu}
is defined only up to a scalar multiplier.
These multipliers can be chosen in such a way that
for any fixed $\nu=(\nu_1,\ldots,\nu_m)$
the operator \eqref{ilamu} depends rationally 
on the weight $\la\ts$. One such choice was made in \cite{KN2}.

In the normalization used in \cite{KN2}
the operator \eqref{ilamu} vanishes if
any of the extra conditions \eqref{mucon} is not satisfied.
However, other arguments \cite{AK,KN2,R,Zel} indicate that
in this case the $\YY$-module $\Mlm$
should still have a unique irreducible quotient.
Another normalization of \eqref{ilamu}
was then provided in \cite{KNP}. In that normalization
for any dominant $\la$ the operator \eqref{ilamu} does not vanish, 
and moreover is constructed explicitly
by using the \emph{fusion procedure\/} from \cite{C1}.
As a $\YY$-module the image of 
\eqref{ilamu} is then isomorphic to
that of the operator obtained by replacing in \eqref{ilamu}
the weight $\mu$ by any weight $\si(\mu)$ where
$\si\in\S_m$ fixes 
$\la\,$.

The construction in \cite{KNP} is combinatorial
and rather involved. The first goal of the present article
is to provide another explicit expression \eqref{i}
for the canonical intertwining operator \eqref{ilamu}
which is easier to use.
In this form the operator \eqref{ilamu}
has the same normalization as in \cite{KNP}.
Yet the existence of this new form of \eqref{ilamu}
is far from obvious. It is obtained by also using
another kind of Zhelobenko operators.
The two kinds of these operators were studied in \cite{KO}
and are closely related.
They are proportional for the generic values of their parameters
and admit analytic continuations to the
domains of non-singularity of each other.
By \cite{KN1} our 
intertwining operator \eqref{ilamu}
corresponds to a product of
Zhelobenko~operators~of~both~kinds.

A more general goal of the present article is to
eliminate the use of the automorphisms \eqref{similar}
in \cite{KN2,KNP}. 
The first steps in this direction have been
made in \cite{N,S}. In particular,
the Howe duality between
the Lie algebras ${\mathfrak{u}}_{\,p,q}$ and $\gl_n$
has been used~in~\cite{S} to construct $\YY$-modules
which do not arise as subquotients of tensor products of
the vector modules. Thus we come to the notions of
\text{polynomial\/} and \text{rational\/} modules
naturally generalizing the corresponding
notions for the general linear group $\GL_n\,$:
the polynomial modules are subquotients
of tensor products of vector modules,
the rational modules are
subquotients of tensor products of vector and
\text{covector\/} modules. These covector modules can be described
as the pullbacks of the vector modules relative to the automorphism
\eqref{omega}~of~$\YY\,$.

Up to twisting by the automorphisms of $\YY$
of the form \eqref{similar}, the isomorphism classes
of irreducible finite-dimensional
$\YY$-modules are labelled by
sequences of $n-1$ monic polynomials 
with complex coefficients, 
called the \text{Drinfeld polynomials\/} \cite{D2}.
The polynomial and the rational irreducible
$\YY$-modules have parametrizations very similar to that~of~\cite{D2}. 
In addition to the $n-1$ Drinfeld polynomials 
we have one more monic polynomial for labelling the
polynomial irreducible finite-dimensional $\YY$-modules, 
or one more rational function for labelling
the rational irreducible finite-dimensional $\YY$-modules.
This result is stated as Theorem \ref{prop-hw} below.
It comes from the analysis of the action of the Yangian $\YY$
on the  tensor products of vector and covector modules.
Moreover, we demonstrate that all the polynomial and the rational 
irreducible $\YY$-modules arise as images of
the intertwining operators 
studied in \cite{KN1,KN2} and in \cite{S} respectively.

Our article is organized as follows. Section 1 recalls
the basic definitions and results from
the representation theory of the Yangian $\YY\,$.
Sections 2 and 3 give realizations of the polynomial and the
rational $\YY$-modules. All proofs are gathered in Section 4.


\bigskip\section{Basic results}


\subsection{}

The \textit{Yangian\/} $\YY$ of the general linear Lie algebra $\gl_n$
is a complex unital associative algebra with a family of generators
$T_{ij}^{(1)}, \, T_{ij}^{(2)} \etc$ with $i,j=1,\dots,n\,$.
These generators are customarily
gathered into the generating series
$$
T_{ij}(u)=
\de_{ij}+T_{ij}^{\,(1)}u^{-1}+T_{ij}^{\,(2)}u^{-2}+\ldots
$$
where $u$ is a formal parameter. The defining relations
of $\YY$ can be then written as
$$
(u-v)\,[\,T_{ij}(u),T_{kl}(v)\,]=
T_{kj}(u)T_{il}(v)-T_{kj}(v)T_{il}(u)
$$
where $v$ is another formal parameter. These relations
imply that for any formal power series
$f(u)$ in $u^{-1}$ with the leading term 1 and all the coefficients in 
$\CC$ the assignments
\begin{equation}
\label{similar}
T_{ij}(u)\mapsto\, f(u)\,T_{ij}(u)
\end{equation}
define an automorphism of the algebra $\YY\,$.
Further, for any $z\in\Cbb$ the assignments
\beq
\label{tauz}
\tau_z\colon\,T_{ij}(u)\mapsto\,T_{ij}(u-z)
\eeq
define an automorphism $\tau_z$ of $\YY\,$.
Here each of the formal series $T_{ij}(u-z)$ in $(u-z)^{-1}$ 
should be re-expanded in $u^{-1}$ so that the assignment
\eqref{tauz} becomes a correspondence between
the respective coefficients of the series in $u^{-1}\,$.
In this article we also employ the involutive automorphism
$\om$ of $\YY$ defined by the assignments
\begin{equation}
\label{omega}
\om\colon\,T_{ij}(u)\mapsto T_{ji}(-u)\,.
\end{equation}

The quotient {of the} algebra $\YY$ by the relations
$T_{ij}^{\,(2)}=T_{ij}^{\,(3)}=\ldots=0$ for all indices $i,j=1\sco n$
is isomorphic to the universal enveloping algebra
$\,\U(\gl_n)\,$. The defining relations of $\YY$
contain in particular the commutation relations
$$
[\,T_{ij}^{(1)},T_{kl}^{(1)}\,]=
\de_{kj}\,T_{il}^{(1)}-\de_{il}\,T_{kj}^{(1)}.
$$
Hence the images of the generators
$T_{ij}^{(1)}$ in the quotient algebra can be identified
with the standard matrix units in $\gl_n$ having the same
indices $i,j\,$. Denote by $\pi$
the corresponding quotient map $\YY\to\,\U(\gl_n)\,$.
We will also use the composite homomorphisms
$\ev_z\,,\evp_z:\,\YY\to\U(\gl_n)$ defined for any parameter $z\in\CC$ by
\begin{equation}
\label{evz}
\ev_z= \pi\,\comp\,\tau_z\,,\quad
\evp_z =\pi\,\comp\,\om\,\comp\,\tau_z\,.
\end{equation}

The associative algebra $\YY$ contains $\,\U(\gl_n)\,$
as a subalgebra. Again, here~we~identify the generators 
$T_{ij}^{(1)}$ with the corresponding matrix units in $\gl_n\,$.
Hence the homomorphism $\pi$ is identical on the subalgebra
$\,\U(\gl_n)\subset\YY\,$. The same is true for $\ev_z$ with any $z\in\CC\,$.


The Yangian $\YY$ is a Hopf algebra.
The counit homomorphism $\YY\to\Cbb$ and the antipodal antihomomorphism 
$\YY\to\YY$ are defined by
mappings $T_{ij}(u)\mapsto \delta_{ij}$ and
$T(u)\mapsto T^{\,-1}(u)$ respectively, while
the comultiplication $\YY\to\YY$
is defined~by
\begin{equation}
\label{Delta}
T_{ij}(u)\,\mapsto\,\sum_{k=1}^n T_{ik}(u)\otimes T_{kj}(u)\,.
\end{equation}


\subsection{}

For any non-negative integer $d$
consider the exterior power $\La^d(\Cbb^n)$ of the defining 
$\gl_n$-module $\Cbb^n$.
Let us denote by $\Phi^{\hspace{.5pt}d}_z$ and $\Phi^{-d}_z$
the $\YY$-modules obtained by pulling
the $\gl_n$-module
$\La^d(\Cbb^n)$ back through the homomorphisms $\ev_z$ and $\evp_{z-1}$ 
respectively, see \eqref{evz}. {Clearly,
these $\YY$-modules are non-zero if and only if $d\le n\,$.

For any $z\in\CC$ the $\YY$-modules $\Phi^{1}_z$ and $\Phi^{-1}_z$ 
are called \textit{vector\/}
and \textit{covector\/} modules respectively}.
Their underlying space is $\CC^n\ts$.
Let $\hc{e_1\sco e_n}$ be the standard basis in $\Cbb^n$.
The action of $\YY$ on the basis vectors is given by
\begin{align}
\label{vectorrep}
T_{ij}(u)\,e_k
\,=\,
\delta_{ij}\,e_k+\frac{\delta_{jk}\,e_i}{u-z}
\hspace{20.5pt}
&\ \quad\text{in}\ \quad\Phi^{1}_z\,,
\\
\label{covectorrep}
T_{ij}(u)\,e_k
\,=\,
\delta_{ij}\,e_k-\frac{\delta_{ik}\,e_j}{u-z+1}
&\ \quad\text{in}\ \quad\Phi^{-1}_z\,.
\end{align}
Further, for any $z\in\CC$ put
\begin{equation}
\label{detmod}
\De_z=\Phi_z^n
\quad\text{and}\quad
\Dep_z=\Phi_z^{-n}\,.
\end{equation}
{We call $\De_z$ and $\Dep_z$} the \textit{determinantal\/}
$\YY$-modules. By definition,
an $\YY$-module is \textit{polynomial\/} if it is
isomorphic to a subquotient of a tensor product of the vector
modules. More generally, an
$\YY$-module is \textit{rational\/} if it is isomorphic
to a subquotient of a tensor product of vector and covector modules. 
In Subsection~\ref{sec:pf-mod} we prove the following

\begin{prop}
\label{prop-mod}
If\/ $d\in\{0,1\sco n\}$ and $z\in\CC$ then\,:

\smallskip

(i) the module $\Phi^d_z$ is polynomial\,;

(ii) the module $\Phi^{-d}_z$ is rational\,;

(iii) the one-dimensional module $\Phi_z^{\hspace{.5pt}0}$ is trivial\,; 

(iv) the one-dimensional modules $\De_z$ and $\Dep_z$ are cocentral\,;

(v) the module $\Phi^{-d}_z$ is isomorphic to\/
$\Phi^{\ts n-d}_z\otimes\Dep_z\,$.

\end{prop}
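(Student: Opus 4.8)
The plan is to verify the five assertions essentially in order, with (v) doing most of the work and the remaining parts following cheaply. For (iii), the module $\Phi^0_z$ is the pullback of $\La^0(\CC^n)=\CC$, on which every matrix unit of $\gl_n$ acts by $0$; hence $T_{ij}(u)$ acts via $\ev_z$ as $\de_{ij}$, i.e. $\Phi^0_z$ is the trivial one-dimensional $\YY$-module, and in particular it is visibly a subquotient of a single vector module (take the quotient of $\Phi^1_z$ by the submodule spanned by suitable basis vectors, or argue via the top exterior power as below). For (i) I would realize $\Phi^d_z$ inside the $d$-fold tensor product $\Phi^1_{z}\otimes\Phi^1_{z+1}\otimes\dots\otimes\Phi^1_{z+d-1}$: using the comultiplication \eqref{Delta} and the explicit vector action \eqref{vectorrep}, the antisymmetrized vector $e_{i_1}\wedge\dots\wedge e_{i_d}$ spans a $\YY$-submodule on which the action agrees with the pullback of $\La^d(\CC^n)$ through $\ev_z$; this is a standard computation with the $R$-matrix, and since a submodule of a tensor product of vector modules is polynomial by definition, (i) follows. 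Assertion (ii) is then immediate once (v) is proved, because $\Phi^{n-d}_z$ is polynomial by (i), $\Dep_z$ is a pullback of a covector module (it is $\Phi^{-n}_z$, a tensor-power construction on covectors), hence rational, and a tensor product of a polynomial and a rational module is rational; so $\Phi^{-d}_z\cong\Phi^{n-d}_z\otimes\Dep_z$ is rational.

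For (iv), the point is that $\De_z=\Phi^n_z$ and $\Dep_z=\Phi^{-n}_z$ are one-dimensional, and a one-dimensional $\YY$-module is given by a homomorphism $\YY\to\CC$, equivalently by sending $T_{ij}(u)\mapsto\de_{ij}\hs g(u)$ for some series $g(u)$ with leading term $1$; such a module is by definition cocentral, i.e. it is the pullback of the trivial module through an automorphism of the form \eqref{similar}. So I only need to compute the scalar by which $T_{ij}(u)$ acts on the line $\La^n(\CC^n)=\CC\,(e_1\wedge\dots\wedge e_n)$. Realizing $\De_z$ inside $\Phi^1_z\otimes\dots\otimes\Phi^1_{z+n-1}$ as in (i) and applying \eqref{Delta} $n-1$ times to $T_{ij}(u)$ acting on $e_1\wedge\dots\wedge e_n$, only the "quantum determinant" combination survives the antisymmetrization, and one gets $T_{ij}(u)\mapsto\de_{ij}\prod_{a=0}^{n-1}\dfrac{u-z-a+1}{u-z-a}$ (the exact product is a bookkeeping matter; the shifts follow from the successive $z,z+1,\dots,z+n-1$ in the vector factors). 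An analogous computation with \eqref{covectorrep} gives the scalar for $\Dep_z$. Either way the answer is $\de_{ij}$ times a series with leading term $1$, which is exactly the statement that these modules are cocentral.

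The heart of the argument is (v). Here I would set up both sides explicitly on the space $\La^{n-d}(\CC^n)$: the left side $\Phi^{-d}_z$ is the pullback of $\La^d(\CC^n)$ through $\evp_{z-1}=\pi\comp\om\comp\tau_{z-1}$, so by \eqref{omega} the action is $T_{ij}(u)\mapsto$ (action of $T_{ji}(-u)$ on $\La^d(\CC^n)$ via $\ev_{z-1}$), which unwinds via the $\La^d$-analogue of \eqref{covectorrep}. The right side is the tensor product $\Phi^{n-d}_z\otimes\Dep_z$, with $\Dep_z$ the cocentral line from (iv) and $\Phi^{n-d}_z$ the pullback of $\La^{n-d}(\CC^n)$ through $\ev_z$; using \eqref{Delta}, $T_{ij}(u)$ acts on $v\otimes 1$ as $\sum_k\big(T_{ik}(u)v\big)\otimes\big(T_{kj}(u)\cdot 1\big)=g(u)\,T_{ij}(u)\,v$ where $g(u)$ is the scalar series of $\Dep_z$. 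So (v) reduces to an isomorphism $\La^d(\CC^n)^{\,\text{covector at }z}\;\cong\;g(u)\cdot\La^{n-d}(\CC^n)^{\,\text{vector at }z}$ as $\YY$-modules. The natural candidate for the intertwiner is the classical Hodge-star / contraction-with-$(e_1\wedge\dots\wedge e_n)$ isomorphism $\La^d(\CC^n)^{*}\cong\La^{n-d}(\CC^n)$, which is $\gl_n$-equivariant up to the determinant twist; one checks it matches the two Yangian actions coefficient by coefficient using the explicit formulas \eqref{vectorrep}–\eqref{covectorrep} and the spectral shift from $\evp_{z-1}$ versus $\ev_z$ (this is where the "$z-1$" in the definition of $\Phi^{-d}_z$ is engineered precisely so the shifts align). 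I expect the main obstacle to be purely computational: tracking the four-way interaction between the transpose $T_{ij}\leftrightarrow T_{ji}$, the sign reversal $u\mapsto -u$, the argument shift $z-1$, and the $g(u)$-twist from the determinantal factor, and confirming that the quantum-minor identities in $\YY$ make the Hodge-star map intertwine the actions exactly rather than merely up to scalar on each isotypic piece — but once the dust settles all the shifts cancel and the isomorphism \eqref{detmod}-normalized claim of (v) holds.
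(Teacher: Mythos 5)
Your strategy for part (v) --- contraction with $e_1\wedge\ldots\wedge e_n$ (a Hodge--star type map) combined with the determinantal twist --- is exactly the route the paper takes, but your write-up stops short of the one step that actually constitutes the proof: verifying that this map intertwines the two $\YY$-actions. You write ``one checks it matches \ldots coefficient by coefficient'' and ``I expect \ldots once the dust settles all the shifts cancel,'' but that check \emph{is} the content of (v); everything else in your paragraph is bookkeeping. In the paper the check is done on the Grassmann algebra $\G_n$: with $S$ the automorphism exchanging $x_i$ and $\d_i$ and $R(x)=S(x)\cdot x_1\ldots x_n$, one proves $R\ts(Y(x))=S(Y)(R(x))$ for all $Y\in\D_n$, after which the intertwining property reduces to the single operator identity $\de_{ij}-\frac{x_j\ts\d_i}{u-z+1}=\frac{u-z}{u-z+1}\bigl(\ts\de_{ij}+\frac{\d_i\ts x_j}{u-z}\ts\bigr)$, i.e.\ to the anticommutation relation $\d_i\ts x_j=\de_{ij}-x_j\ts\d_i$; no quantum-minor identities enter at all. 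As written, your (v) is a plan rather than a proof, and this gap propagates to your (ii), which you derive from (v).

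Two further concrete problems. The scalar you assign to $\De_z$ in (iv) is wrong: $\De_z$ is by definition the $\ev_z$-pullback of $\La^n(\Cbb^n)$, on which every matrix unit $E_{ij}$ acts by $\de_{ij}$, so $T_{ij}(u)\mapsto\de_{ij}\,\frac{u-z+1}{u-z}$ (and $T_{ij}(u)\mapsto\de_{ij}\,\frac{u-z}{u-z+1}$ for $\Dep_z$); your product $\prod_{a=0}^{n-1}\frac{u-z-a+1}{u-z-a}$ telescopes to $\frac{u-z+1}{u-z-n+1}$, which is not it. This does not harm cocentrality (any one-dimensional module on which $T_{ij}(u)$ acts by $\de_{ij}\ts g(u)$ is cocentral, directly from \eqref{Delta}), but the correct scalar of $\Dep_z$ is precisely what makes the shifts cancel in (v), so if you carried out the deferred computation with your scalar it would fail. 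Also, with the convention \eqref{vectorrep} and your increasing shifts $z,z+1,\ldots$ the antisymmetric vectors are not preserved (already $T_{11}(u)(e_1\otimes e_2-e_2\otimes e_1)$ acquires a symmetric component); they span a quotient, not a submodule --- harmless for (i), since subquotients suffice, but the claim should be corrected. Finally, your (ii) quietly assumes that $\Dep_z$ is rational, which is the case $d=n$ of (ii) itself; the paper's route avoids both this circularity and any reliance on (v): since $\om\comp\tau_z=\tau_{-z}\comp\om$, twisting by $\om$ turns any realization of an exterior-power evaluation module by vector modules into a realization of $\Phi^{-d}_z$ by covector modules, so (ii) follows from (i) in one line.
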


\begin{remark}
By the last two parts of Proposition \ref{prop-mod}  
any rational $\YY$-module is isomorphic to a tensor product of a
polynomial $\YY$-module and a number of determinantal modules 
of the form $\De'_z\,$.
Further, every polynomial or rational $\YY$-module
admits an action of the complex general linear group ${\rm GL\ts}_n$
compatible with the embedding $\U(\gl_n)\subset\YY$ as described above.
This action of the group $\operatorname{GL\ts}_n$ 
is polynomial or rational respectively.
\end{remark}

Now let $\la=(\la_1,\ldots,\la_m)$
and $\mu=(\mu_1,\ldots,\mu_m)$ be two
elements of $\CC^m$ such that each difference
$\nu_a=\lambda_a-\mu_a$
is an integer.
The corresponding
\textit{standard rational\/} $\YY$-module
is the tensor product
\begin{equation}
\label{starat}
\Mlm=
\Phi_{\mu_1}^{\ts\nu_1}
\otimes\ldots\otimes
\Phi_{\mu_m}^{\ts\nu_m}\,.
\end{equation}
We will be occasionally calling $\la$ and $\mu$ the weights of $\Mlm\,$.
If $\nu_1\sco\nu_m\ge0$ then $\Mlm$ is the 
\textit{standard polynomial\/} $\YY$-module as employed
in \eqref{ilamu}. Note that by Proposition \ref{prop-mod}
the standard polynomial $\YY$-modules
are indeed polynomial in our terminology, and
the standard rational $\YY$-modules
are indeed rational.


\subsection{}

\,\ts
Let us recall {some} basic facts {from} the
representation theory of the Yangian~$\YY\,$.
They were first obtained by Tarasov \cite{T1,T2} in the case
$n=2$ and then generalized by Drinfeld \cite{D2} to any $n\,$.
For a detailed exposition of the
proofs of these basic facts see \cite{M}.

Let $\Psi$ be any irreducible
finite-dimensional $\YY$-module.
There exists a non-zero vector
$v\in\Psi$ called \textit{highest\/},
such that $T_{ij}(u)\,v=0$ for all $i<j\,$.
This vector is unique up to a multiplier from $\CC\,$.
For every index $i$ there exists a formal power series $A_i(u)$
in $u^{-1}$ with coefficients in $\Cbb$ and the leading term $1$
such that $T_{ii}(u)\,v=A_i(u)\,v$.
For any $i<n$ we have
\begin{equation}
\label{drinpol}
\frac{A_i(u)}{A_{i+1}(u)}=\frac{P_i(u+1)}{P_i(u)}
\end{equation}
where $P_i(u)$ are monic polynomials in $u\,$.
The polynomials $P_1(u)\sco P_{n-1}(u)$ are called the
\textit{Drinfeld polynomials\/} of $\Psi\,$.
The series $A_1(u)\sco A_n(u)$ determine the
$\YY$-module $\Psi$ uniquely up to isomorphism.
Any sequence of formal power series $A_1(u)\sco A_n(u)$
in $u^{-1}$ with coefficients in $\CC$ and the leading
terms $1\,$ satisfying \eqref{drinpol}
for some monic polynomials $P_1(u)\sco P_{n-1}(u)$ occurs in this way.

The sequence of series $A_1(u)\sco A_n(u)$
satisfying \eqref{drinpol}
can be recovered from any one of them, say from the $A_n(u)\ts$,
and from the sequence of monic polynomials $P_1(u)\sco P_{n-1}(u)\,$.
Here the series $A_n(u)$ itself can be chosen arbitrary, 
provided its leading term 
is $1\,$. Our first (and rather elementary) result is the next theorem.
We prove it in Subsections~\ref{sec:pf-hw}~and~\ref{sec:pf-hw-if}.

\begin{theorem}
\label{prop-hw}
The irreducible finite-dimensional $\YY$-module
determined by $A_n(u)$ and by
$P_1(u)\sco P_{n-1}(u)$ is polynomial or rational respectively
if and only if
\begin{equation}
\label{pn}
A_n(u)=\dfrac{Q_n(u+1)}{Q_n(u)}
\end{equation}
for some polynomial or rational function $Q_n(u)$ in $u\,$.
\end{theorem}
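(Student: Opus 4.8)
The plan is to prove both directions by reducing everything to the one‑dimensional determinantal modules $\De_z$ and $\Dep_z$ together with the structure of tensor products already recorded in Proposition~\ref{prop-mod}. First I would compute the highest‑weight series of the basic building blocks. For the vector module $\Phi^1_z$ of \eqref{vectorrep} the highest vector is $e_1$, and reading off $T_{ii}(u)\,e_1$ gives $A_1(u)=1+(u-z)^{-1}=\frac{u-z+1}{u-z}$ and $A_i(u)=1$ for $i>1$; so its only nonzero Drinfeld polynomial is $P_1(u)=u-z$ and its $A_n(u)\equiv 1$. For the exterior power $\Phi^d_z$ the highest vector is $e_1\wedge\cdots\wedge e_d$, giving $A_i(u)=\frac{u-z+1}{u-z}$ for $i\le d$ and $A_i(u)=1$ for $i>d$; in particular for the determinantal module $\De_z=\Phi^n_z$ every $A_i(u)=\frac{u-z+1}{u-z}$, so $A_n(u)=\frac{u-z+1}{u-z}$, which is of the form \eqref{pn} with $Q_n(u)=u-z$, a genuine polynomial, while all Drinfeld polynomials $P_i(u)$ are identically $1$. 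Dually, using Proposition~\ref{prop-mod}(v) that $\Phi^{-d}_z\cong\Phi^{n-d}_z\otimes\Dep_z$ together with the covector formula \eqref{covectorrep}, one finds the highest series of $\Dep_z=\Phi^{-n}_z$ to be $A_i(u)=\frac{u-z}{u-z+1}$ for all $i$, so its $A_n(u)=\frac{u-z}{u-z+1}=\frac{Q_n(u+1)}{Q_n(u)}$ with $Q_n(u)=\frac{1}{u-z}$, a rational (non‑polynomial) function, and again all $P_i$ trivial.

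Next I would record how the highest‑weight data behave under tensor product: if $\Psi$ and $\Psi'$ are irreducible with highest series $A_i(u)$ and $A_i'(u)$, then $v\otimes v'$ is a highest vector in $\Psi\otimes\Psi'$ and, by the comultiplication \eqref{Delta}, the eigenvalue of $T_{ii}(u)$ on it is $\sum_k A^{\text{upper}}\!=A_i(u)\,A_i'(u)$ up to the contributions of strictly upper‑triangular terms, which annihilate $v\otimes v'$; hence the generalized highest series of a tensor product is the product of the factors' series, and in particular the associated $Q_n$'s multiply and the Drinfeld polynomials $P_i$ multiply. Since the irreducible subquotient of a module containing a highest vector with prescribed series is determined by that series, this shows: the class of irreducibles whose data $(A_n(u);P_1,\dots,P_{n-1})$ satisfy \eqref{pn} with $Q_n$ polynomial (resp. rational) is closed under the tensor operations and subquotients used to define polynomial (resp. rational) modules, and it contains all vector modules (where $A_n\equiv 1$, $Q_n=1$). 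This proves the "only if'' direction: a polynomial module, being a subquotient of a tensor product of vector modules, has $A_n(u)$ of the form \eqref{pn} with $Q_n$ polynomial; a rational module, being a subquotient of a tensor product of vector and covector modules and hence — by the Remark following Proposition~\ref{prop-mod} — isomorphic to a polynomial module tensored with determinantal modules $\Dep_z$, has $A_n$ of the form \eqref{pn} with $Q_n$ a product of a polynomial and finitely many factors $(u-z)^{-1}$, i.e. a rational function.

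For the converse, suppose we are given Drinfeld polynomials $P_1(u),\dots,P_{n-1}(u)$ and a series $A_n(u)=\frac{Q_n(u+1)}{Q_n(u)}$ with $Q_n$ polynomial (resp. rational). Let $\Psi_0$ be the irreducible $\YY$‑module with Drinfeld polynomials $P_1,\dots,P_{n-1}$ and with $A_n\equiv 1$; by \cite{D2} (as recalled before the theorem) this is a finite‑dimensional module, and in fact one of the standard results is that it is polynomial — but to be self‑contained I would instead construct the desired module directly as a subquotient of $\Psi_0\otimes(\text{determinantal factors})$, choosing $\Psi_0$ to be a suitable subquotient of a tensor product of vector modules realizing the given $P_i$ (this is the known construction of irreducibles via tensor products of fundamental evaluation modules, e.g. from \cite{M}, and it is exactly the content invoked in the introduction). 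Factor $Q_n(u)=c\prod_j(u-z_j)^{m_j}$ with $m_j\in\Z$; then tensoring $\Psi_0$ with $\De_{z_j}$ raises the exponent of $(u-z_j)$ in $Q_n$ by one and tensoring with $\Dep_{z_j}$ lowers it by one, without changing any $P_i$ (all determinantal modules have trivial Drinfeld polynomials, as computed above). Hence a suitable tensor product $\Psi_0\otimes\bigotimes_j\De_{z_j}^{\otimes a_j}\otimes\bigotimes_j\Dep_{z_j}^{\otimes b_j}$ has the prescribed highest series, its irreducible subquotient is the module we want, and it is polynomial when no $\Dep$ factors are needed (i.e. $Q_n$ polynomial) and rational in general.

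The main obstacle I anticipate is the bookkeeping in the converse: one must (a) be careful that the standard realization of the irreducible with prescribed $P_i$ really can be taken polynomial — here I would lean on the cited literature \cite{D2,M} and on the "only if'' direction already proved, rather than reprove it — and (b) handle the overall scalar and the non‑monic/rational nature of $Q_n$ cleanly, noting that rescaling $A_n$ by an arbitrary power series of leading term $1$ is harmless since the theorem only asks for $A_n$ of the form \eqref{pn}, and that a factor like $(u-z)^{-1}$ in $Q_n$ is produced precisely by one $\Dep_z$. The computations of the $A_i(u)$ for $\Phi^{\pm d}_z$ in the first paragraph are routine but are the technical heart, and everything else is formal manipulation with \eqref{Delta}.
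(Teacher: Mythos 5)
Your \emph{if} direction is essentially the paper's own argument in lightly different clothing: one realizes the prescribed data inside a tensor product of fundamental evaluation modules and one\-/dimensional determinantal factors $\De_z$, $\Dep_z$, and passes to the irreducible subquotient through the distinguished vector whose $T_{ii}(u)$-eigenvalues are read off from \eqref{Delta} (the paper does this in one step with the standard module $\Mlm$ whose $\nu_a$ lie in $\{1,\ldots,n,-n\}$, citing the proof of \cite[Theorem 2.16]{CP} for the existence of that subquotient). The \emph{only if} direction, however, has a genuine gap. A polynomial (rational) module is by definition an \emph{arbitrary} irreducible subquotient of a tensor product of vector (and covector) modules, and such tensor products are in general reducible with several composition factors. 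Your computation with \eqref{Delta} only identifies the eigenvalue of $T_{ii}(u)$ on the single vector $v\otimes v'$, i.e.\ it controls the one composition factor containing the image of the tensor product of highest vectors; the assertion that the class of irreducibles satisfying \eqref{pn} is ``closed under \ldots subquotients'' is precisely what has to be proved and is never justified. For instance, when two evaluation parameters differ by $1$ the product $\Phi^{1}_{z}\otimes\Phi^{1}_{w}$ is reducible, and your argument says nothing about the series $A_n(u)$ of its lower composition factor.

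The missing ingredient is exactly the paper's Lemma \ref{lemma}: by induction on the number of tensor factors, \emph{every} eigenvalue of $T_{ii}(u)$ on $\Phi^{\ts\sign_1}_{z_1}\otimes\cdots\otimes\Phi^{\ts\sign_m}_{z_m}$ is a product of factors $(u-z_a+1)/(u-z_a)$ and $(u-z_a)/(u-z_a+1)$, the proof resting on the explicit formulas \eqref{vectorrep} and \eqref{covectorrep} and on the invariant subspaces they provide. Since the eigenvalues of $T_{nn}(u)$ on any subquotient occur among its eigenvalues on the whole module, the highest-weight series $A_n(u)$ of \emph{every} irreducible subquotient is then of the form \eqref{pn} with $Q_n$ polynomial, respectively rational. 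Without this statement about the full spectrum (not just the highest vector), the ``only if'' half of the theorem is not established; your reduction of the rational case to the polynomial case via the Remark after Proposition \ref{prop-mod} is legitimate but does not remove the gap, because the polynomial case itself requires the spectral lemma.
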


Clearly, in the case of a polynomial $\YY$-module
the polynomial function $Q_n(u)$ in \eqref{pn}
can be chosen monic. More generally, the rational
function $Q_n(u)$ can be chosen as a ratio of two
monic polynomials. 
This choice defines the function $Q_n(u)$ in \eqref{pn} uniquely.

Note that equalities \eqref{drinpol} and \eqref{pn}
imply that for each $i=1\sco n$
\begin{equation}
\label{qi}
A_i(u)=\dfrac{Q_i(u+1)}{Q_i(u)}
\qquad\text{where}\qquad 
Q_i(u)=P_i(u)\ldots P_{n-1}(u)\,Q_n(u)\,.
\end{equation}
The functions $Q_1(u)\sco Q_n(u)$
are polynomial or rational if the corresponding
irreducible finite-dimensional $\YY$-module
is respectively polynomial or rational.
Here the converse (the only if) statement is untrue,
because by definition the functions $Q_1(u)\sco Q_n(u)$
are related by polynomials $P_1(u)\sco P_{n-1}(u)\,$.


\bigskip\section{Polynomial modules}


\subsection{}
\label{sec:2.1}

For any positive integer $m$ take the general linear Lie algebra $\gl_m\,$.
For $a,b=1\sco m$ let $E_{ab}\in\gl_m$ be the standard matrix units.
Let $\h$ be the Cartan subalgebra of $\gl_m$ with the basis
$\{E_{11}\sco E_{mm}\}\,$. Identify the dual vector space $\h^*$
with $\Cbb^m$ by using this basis~of~$\h\,$.
Hence any element of $\aw\in\h^*$ will be written as 
$(\aw_1\sco\aw_m)\in\Cbb^m\,$.
The Weyl group of $\gl_m$ is isomorphic to the symmetric group 
$\S_m$. Thus there is
a natural action of the group $\S_m$ on $\h^*$ 
such that $\sigma(\aw)_a=\aw_{\,\sigma^{-1}(a)}$ for $\sigma\in\S_m\,$.
We denote by $\sigma_0$ the longest element of $\S_m$
so that we have $\sigma_0(a)=m+1-a$ for each $a=1\sco m\,$.

Choose the triangular decomposition 
$\gl_m = \n\oplus\h\oplus\n'$ where $\n$ and $\n'$ are the opposite 
nilpotent subalgebras of $\gl_m$
spanned by the vectors $E_{ab}$ where $a>b$ and $a<b$ respectively.
Then the positive roots of $\gl_m$ are the 
weights $\la\in\h^*$ with the only non-zero coordinates $\la_a=1$ and  
$\la_b=-1$ for some pair of indices $a<b\,$.

Denote by $\G_{mn}$ the Grassmann algebra on the $m\ts n$ anticommuting
variables $x_{ai}$ where $a=1\sco m$ and $i=1\sco n\,$.
Let $\d_{ai}$ be the operator of left derivation in $\G_{mn}$
relative to $x_{ai}\,$. Then for any $a,b=1\sco m$ consider
the first order differential operator on $\G_{mn}$
\begin{equation*}
\label{phi}
\E_{ab}\,=\,\sum_{k=1}^n\,x_{ak}\,\d_{\ts bk}\,.
\end{equation*}
The assignment $E_{ab}\mapsto\E_{ab}$ defines an action
of the Lie algebra $\gl_m$ on $\G_{mn}\,$.
We will use the weight decomposition of $\G_{mn}$
under the action of $\h\subset\gl_m\,$. 
We will also use the  action of the symmetric group $\S_m$ on 
$\G_{mn}$ by permutations of the first indices of the variables:
$$
\sigma(x_{ai})=x_{\sigma(a)\ts i}
\quad\text{for any}\quad
\sigma\in\S_m\,.
$$

Now take any two indices $a$ and $b$ such that $a<b\,$.
Let $\aw\in\h^*$ be any weight such that
$\aw_a-\aw_b\neq-1,-2,\ldots\,\,$. 
Define the linear operators $X^\aw_{ab}$ and $Y^\aw_{ab}$
on the vector space $\G_{mn}$ by 
$$
X^\aw_{ab}\,=\,1+\sum_{r=1}^\infty\,
\frac{(-1)^r\E_{ba}^{\ts r}\,\E_{ab}^{\ts r}}
{r!\,(\aw_a-\aw_b+1)_{\ts r}}
\qquad\text{and}\qquad
Y^\aw_{ab}\,=\,1+\sum_{r=1}^\infty\,
\frac{(-1)^r\E_{ab}^{\ts r}\,\E_{ba}^{\ts r}}
{r!\,(\aw_a-\aw_b+1)_{\ts r}}
$$
where $(u)_{\ts r}=u\,(u+1)\ldots(u+r-1)$ is the Pochhammer function.
The operators $X^\aw_{ab}$ and $Y^\aw_{ab}$ on $\G_{mn}$
are well defined and preserve the subspace  
$\G^{\ts\nu}_{mn}\subset\G_{mn}$ of every weight
$\nu\in\h^*$.


\subsection{}
\label{sec:22}

For any two weights $\la,\mu\in\h^*$ let 
$\nu=\la-\mu$ be their difference.
Suppose that each coordinate $\nu_a$ of $\nu$ is a non-negative integer
not exceeding $n\,$.
Let us consider the standard polynomial $\YY$-module $\Mlm\,$.
By the definition \eqref{starat}
its underlying vector space is 
\begin{equation}
\label{underspace}
\La^{\nu_1}(\Cbb^n)\sot\La^{\nu_m}(\Cbb^n)\,.
\end{equation}
Using the basis vectors $e_1\sco e_n\in\Cbb^n$
we can now define an isomorphism of vector~spaces
$$
\al_\nu:\,\Mlm\,\to\,\G_{mn}^{\ts\nu}
$$
as follows. For an element of the vector space \eqref{underspace}
$$
w\,=\,
(\ts e_{i_1}\sw e_{i_{\nu_1}})
\sot
(\ts e_{j_1}\sw e_{j_{\nu_m}})
$$
we set
\beq
\label{anu}
\al_\nu(w)\,=\,
(\ts x_{1\ts i_1}\ts\ldots\,x_{1\ts i_{\nu_1}})
\,\ldots\,
(\ts x_{m\ts j_1}\ts\ldots\,x_{m\ts j_{\nu_m}})\,.
\eeq

Now suppose that $\la\in\h^*$ satisfies 
the dominance condition \eqref{domcon}.
Define a linear map 
$$
I_\mu^\la\,:\Mlm\,\to\,\M_{\,\sigma_0(\mu)}^{\,\sigma_0(\la)}
$$
as the composition
\beq
\label{i}
I_\mu^\la\,=\,(-1)^N\,
\al_{\sigma_0(\nu)}^{\,-1}\, 
\sigma_0\,Z^\la_\mu\,\al_{\nu}
\eeq
where
\beq
\label{N}
N\ =\sum_{1\le a<b\le m}\nu_a\,\nu_b\,,
\eeq
\beq
\label{z}
Z^\la_\mu\ =
\prod_{1\le a<b\le m}^{\longrightarrow}
\left\{
\begin{array}{rl}
\,X_{ab}^\la&\text{if\ }\;\;\nu_a\ge\nu_b
\\[4pt]
\,Y_{ab}^\mu&\text{if\ }\;\;\nu_a<\nu_b
\end{array}
\right.
\eeq
and the ordering of the factors in the product
over the pairs of indices $a<b$ corresponds to any normal ordering\ts
\footnote{A total ordering $\prec$ 
of the system of positive roots
of a reductive Lie algebra is called \emph{normal} if for any positive roots 
$\alpha,\beta$ such that $\alpha\prec\beta$ and $\alpha+\beta$ 
is also a positive root, the relation 
$\alpha\prec (\alpha+\beta)\prec\beta$ holds\ts; see \cite{AST}. 
There is a natural bijection between the set of normal orderings
of the system of positive roots of a reductive Lie algebra and the 
set of reduced decompositions
of the longest element of the Weyl group \cite{Z}.}
of the positive roots of the Lie algebra $\gl_m\,$.
It means that the factor corresponding to the pair $a<c$
stands between those corresponding to $a<b$~and~to~$b<c\,$.
The right hand side of \rf{i} is defined due to the dominance of 
$\la\,$. In Subsections \ref{sec:pf-i-1} to \ref{sec:pf-i-3}
we will derive from the results of the works \cite{KN1,KN2,KNP}
the following proposition.

\begin{prop}
\label{prop-i} 
For any weights $\la,\mu\in\h^*$ such that $\la$ is dominant and all 
coordinates
$\nu_a=\lambda_a-\mu_a$ are non-negative integers not exceeding $n\ts$,
the operator $I_\mu^\la$ defined by \eqref{i}:

\smallskip

(i) is not zero\ts;

(ii) does not depend on the choice of the normal ordering\ts;

(iii) commutes with the action of the Yangian $\YY\,$;

(iv) has the image irreducible under the latter action. 
\end{prop}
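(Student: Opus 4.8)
The plan is to deduce Proposition~\ref{prop-i} from the earlier works \cite{KN1,KN2,KNP} by matching the operator \eqref{i} against operators already constructed there. The fundamental point is that the maps $X^\aw_{ab}$ and $Y^\aw_{ab}$ are exactly the two kinds of Zhelobenko operators from \cite{KO}, acting on the $\gl_m$-module $\G_{mn}$, and that under the isomorphisms $\al_\nu$ the $\YY$-module structure on $\Mlm$ is the one induced by the Howe duality between $\gl_m$ and $\gl_n$ on $\G_{mn}$. So first I would set up this dictionary carefully: recall from \cite{KN1} that the coproduct-twisted action of $\YY$ on $\Phi_{\mu_1}^{\nu_1}\otimes\cdots\otimes\Phi_{\mu_m}^{\nu_m}$ is realized inside $\G_{mn}$ (in the appropriate weight component) with the parameters $\mu_a$ entering through the $\gl_m$-weight, and that a single Zhelobenko operator $X^\la_{ab}$ or $Y^\mu_{ab}$ intertwines the $\YY$-actions on $\Mlm$ and on the module obtained by swapping the $a$-th and $b$-th tensor factors together with the corresponding transposition of weights. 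This is the content of the results quoted in the introduction relating \eqref{ilamu} to products of Zhelobenko operators of both kinds.

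Granting this dictionary, part~(iii) is the assertion that the product $Z^\la_\mu$ in \eqref{z}, followed by the permutation $\sigma_0$ of the first indices, intertwines the $\YY$-action on $\Mlm$ with that on $\M^{\sigma_0(\la)}_{\sigma_0(\mu)}$; this follows by composing the single-transposition intertwining property of each factor along the reduced word for $\sigma_0$ determined by the chosen normal ordering, the sign $(-1)^N$ and the maps $\al_\nu$, $\al_{\sigma_0(\nu)}$ being $\YY$-linear identifications of the relevant weight spaces with $\Mlm$ and $\M^{\sigma_0(\la)}_{\sigma_0(\mu)}$. For part~(ii), independence of the normal ordering, I would invoke the standard braid-type relations among Zhelobenko operators proved in \cite{Z} and \cite{KO}: any two normal orderings of the positive roots of $\gl_m$ are connected by a sequence of elementary ``rank-two'' moves (corresponding to the braid relations in $\S_m$), and under each such move the corresponding rearrangement of the factors $X$ or $Y$ leaves the product unchanged; here one must be attentive to which of the two kinds of operator appears in each slot, but since the rule in \eqref{z} depends only on the comparison $\nu_a\gtrless\nu_b$ and these inequalities are compatible along any root string, the mixed product still satisfies the required exchange identities. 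Parts~(i) and~(iv) --- nonvanishing and irreducibility of the image --- I would obtain by identifying this operator with the canonical intertwining operator \eqref{ilamu} in the normalization of \cite{KNP}: the explicit fusion-procedure construction there was shown to coincide, up to scalar, with a product of Zhelobenko operators, and the scalar is nonzero precisely because of dominance of $\la$; irreducibility of the image is then exactly the principal theorem of \cite{KN2} (with its extension in \cite{KNP} covering the case when \eqref{mucon} fails), transported through $\al_\nu$ and $\al_{\sigma_0(\nu)}$.

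The main obstacle I anticipate is bookkeeping rather than conceptual: the normalizations. The operators $X^\aw_{ab}$, $Y^\aw_{ab}$ as written here are ``extremal projector''--type series with a specific leading term $1$, and one must verify that the product \eqref{z} with this normalization --- together with the sign $(-1)^N$ coming from \eqref{anu} --- reproduces exactly the $\cite{KNP}$-normalized operator and not merely a scalar multiple that could a priori vanish for special $\mu$. Concretely, the subtle point is that in the normalization of \cite{KN2} the operator \eqref{ilamu} vanishes when a condition \eqref{mucon} fails, whereas the present normalization (matching \cite{KNP}) does not; so the argument must show that the analytic continuation / re-normalization implicit in writing $X^\la_{ab}$ with $\la$ only subject to $\la_a-\la_b\neq -1,-2,\dots$ (rather than strict dominance of each factor along the reduced word) is precisely the one that removes the spurious zeros. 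I would handle this by tracking, factor by factor in the normal-ordered product, the Pochhammer denominators $(\aw_a-\aw_b+1)_r$ and comparing with the poles of the $R$-matrix--type normalization in \cite{KNP}, and by using Proposition~\ref{prop-mod}(v) together with the cocentrality statement to reduce, when convenient, to the purely polynomial case already treated. Everything else --- the compatibility of $\sigma_0$ with the $\gl_m$-action on $\G_{mn}$, the fact that $\al_\nu$ is a weight-space isomorphism, and the braid relations for normal orderings --- is routine given the cited results.
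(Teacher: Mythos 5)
Your overall route is the same as the paper's: realize the factors of \eqref{z} as the two kinds of Zhelobenko operators of \cite{KO} acting on $\G_{mn}$, import the intertwining property of each elementary factor from \cite{KN1,KN2} (this gives (iii) exactly as in the paper), use normal-ordering/braid considerations for (ii), and quote \cite{KN2,KNP} for (iv). But the two points you yourself set aside as ``bookkeeping'' are precisely where your outline has no mechanism, and they are the actual content of the proposition. For (ii): the braid relations \eqref{braid3},\eqref{braid2} of \cite{Z,KO} hold separately for the family $\xic_a$ and for the family $\zec_a$; they say nothing about the \emph{mixed} product \eqref{z}, and your remark that the inequalities $\nu_a\gtrless\nu_b$ are ``compatible along any root string'' does not produce the required mixed exchange identities. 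The paper never proves mixed braid relations. Instead it first forms the product of $X$-factors only (formula \eqref{xabla}), where ordering-independence is the genuine braid relation for $\xic_a$, and then converts the factors with $\nu_a<\nu_b$ using the exact proportionality $(\mu_a-\mu_b)\,X^\la_{ab}=(\la_a-\la_b)\,Y^\mu_{ab}$, a consequence of \eqref{qxiprod} recorded as \eqref{4.3.6}; so the mixed product differs from the pure one by a scalar that does not depend on the ordering, and the boundary cases $\la_a=\la_b$ are then reached by analytic continuation in $\la$ at fixed $\nu$.

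For (i), and for the identification with the \cite{KNP}-normalized operator that you need for (iv), your plan to ``track Pochhammer denominators and compare with the poles of \cite{KNP}'' is not yet an argument; the danger you name — a scalar that could vanish when \eqref{mucon} fails — is real and is exactly what must be excluded, since the pure-$X$ normalization of \cite{KN2} does vanish there. The paper's mechanism is concrete: evaluate the pure-$X$ operator on the vector \eqref{vlm} using \cite[Proposition~3.7]{KN1}, obtaining $(-1)^N$ times the scalar $\prod_{\nu_a<\nu_b}(\la_a-\la_b)/(\mu_a-\mu_b)$ times the target highest vector; the same relation \eqref{4.3.6} shows that replacing each $X^\la_{ab}$ with $\nu_a<\nu_b$ by $Y^\mu_{ab}$ cancels precisely this scalar, which yields \eqref{ivlm}, i.e.\ $I^\la_\mu(\vlm)=v^{\,\si_0(\la)}_{\,\si_0(\mu)}$. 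This gives nonvanishing at once, and it is \eqref{ivlm} (not a general ``up to scalar'' comparison) that identifies $I^\la_\mu$ with the operator $I(\mu-\rho)$ of \cite{KNP}, so that (iv) follows from \cite[Theorem~1.1 and Proposition~2.9]{KNP}. Without the evaluation on the highest vector and the proportionality relation between the two kinds of Zhelobenko operators, your outline does not close: as the introduction stresses, the existence of the form \eqref{i} in the \cite{KNP} normalization is exactly the non-obvious point.
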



\subsection{}

Let us describe the highest vector 
and the series $A_1(u)\sco A_n(u)$ for the irreducible $\YY$-module
\begin{equation}
\label{llamu}
\Psi^\la_\mu
\,=\,
\Img I_\mu^\la
\,\subset\,
\M_{\,\sigma_0(\mu)}^{\,\sigma_0(\la)}\,.
\end{equation}
Let 
\begin{equation}
\label{vd}
v_d=e_1\wedge\ldots\wedge e_d
\end{equation}
be the highest vector of
the $\gl_n$-module $\Lambda^d(\CC^n)\,$. Here we use the triangular
decomposition of the Lie algebra $\gl_n$ similar to that of $\gl_m\,$,
see Subsection \ref{sec:2.1}. For the vector
\begin{equation}
\label{vlm}
\vlm=v_{\nu_1}\sot v_{\nu_m}\in\Mlm
\end{equation}
by using \eqref{Delta} we get 
\beq
\label{tijvlm}
T_{ij}(u)\,\vlm=0
\quad\text{if}\quad
i<j\,.
\eeq
We also get
\beq
\label{tiivlm}
T_{ii}(u)\,\vlm=A_i(u)\,\vlm
\eeq
where for each index $i$
\begin{equation}
\label{hw}
A_i(u)\ =\prod_{a\,:\,\nu_a\ts\ge\ts i} \frac{u-\mu_a+1}{u-\mu_a}\ .
\end{equation}

In the course of proving our Proposition \ref{prop-i}
we will establish the equality
\begin{equation}
\label{ivlm}
I_\mu^\la\bigl(v_\mu^\la\ts\bigr)=v_{\ts\sigma_0(\mu)}^{\ts\sigma_0(\la)}
\end{equation}
where according to \eqref{vlm}
\begin{equation}
\label{vlms}
v_{\ts\sigma_0(\mu)}^{\ts\sigma_0(\la)}=v_{\nu_m}\sot v_{\nu_1}\,.
\end{equation}
Hence the irreducible $\YY$-module 
$\Psi_{\mu}^\la$ has the highest vector
$v_{\sigma_0(\mu)}^{\sigma_0(\la)}$
and the corresponding series $A_1(u)\sco A_n(u)$
are given by \eqref{hw}. By \eqref{drinpol},\eqref{pn},\eqref{qi} 
it now follows that~for~$\Psi_{\mu}^\la$ 
\begin{align*}
P_i(u)\ 
&=\prods{a\,:\,\nu_a\ts=\ts i}(\ts u-\mu_a)
\ \quad\text{where}\ \quad
i=1\sco n-1\ts;
\\
Q_i(u)\ 
&=\prods{a\,:\,\nu_a\ts\ge\ts i}(\ts u-\mu_a)
\ \quad\text{where}\ \quad
i=1\sco n\ts.
\end{align*}
While proving
Theorem~\ref{prop-hw} we will also obtain the following corollary to
Proposition~\ref{prop-i}.

\begin{corollary}
\label{prop-irrep}
The\/ 
$\Psi^\la_\mu$ exhaust all the 
non-trivial irreducible polynomial $\YY$-modules.
\end{corollary}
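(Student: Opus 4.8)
The plan is to show that every non-trivial irreducible polynomial $\YY$-module occurs among the $\Psi^\la_\mu$ by combining the highest-weight classification recalled in Subsection~1.3 with the explicit computation of the series $A_1(u)\sco A_n(u)$ for $\Psi^\la_\mu$ given in \eqref{hw}. By Proposition~\ref{prop-i}(iv) each $\Psi^\la_\mu$ is irreducible, and by the discussion following \eqref{vlms} its invariants $A_i(u)$ are the rational functions in \eqref{hw}; in particular $A_n(u)=\prod_{a:\nu_a=n}(u-\mu_a+1)/(u-\mu_a)$, so the associated function $Q_n(u)=\prod_{a:\nu_a=n}(u-\mu_a)$ is a polynomial, and $\Psi^\la_\mu$ is polynomial by Theorem~\ref{prop-hw}. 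So one inclusion is immediate; the content is the reverse one.

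First I would take an arbitrary non-trivial irreducible polynomial $\YY$-module $\Psi$, with highest weight series $A_1(u)\sco A_n(u)$, Drinfeld polynomials $P_1(u)\sco P_{n-1}(u)$, and, by Theorem~\ref{prop-hw}, a monic polynomial $Q_n(u)$ with $A_n(u)=Q_n(u+1)/Q_n(u)$. Setting $Q_i(u)=P_i(u)\ldots P_{n-1}(u)\,Q_n(u)$ as in \eqref{qi}, each $Q_i$ is a monic polynomial, and $Q_1 \mid Q_2 \mid \cdots \mid Q_n$ is a divisibility chain of monic polynomials, since $Q_i(u)=P_i(u)\,Q_{i+1}(u)$. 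Conversely, any such divisibility chain determines $P_i(u)=Q_i(u)/Q_{i+1}(u)$ and hence, via \eqref{drinpol} and \eqref{qi}, a unique irreducible finite-dimensional $\YY$-module, which is polynomial by Theorem~\ref{prop-hw}. Thus non-trivial irreducible polynomial $\YY$-modules are in bijection with divisibility chains $Q_1\mid\cdots\mid Q_n$ of monic polynomials with $Q_1\neq 1$ (the condition $Q_1=1$, i.e. $A_1(u)=1$, forcing triviality — one should check that $A_1=\cdots=A_n=1$ is equivalent to $Q_1=1$, using the leading-term-$1$ normalization).

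Next I would realize an arbitrary such chain by a suitable $\Psi^\la_\mu$. Given monic $Q_1\mid\cdots\mid Q_n$ with $\deg Q_n\le\deg Q_{n-1}\le\cdots$... more precisely with each $Q_i/Q_{i+1}$ a monic polynomial, write $Q_i(u)=\prod_{a:\nu_a\ge i}(u-\mu_a)$: concretely, list the roots of $Q_n$ with multiplicity as $\mu_a$ for those $a$ with $\nu_a=n$, then the extra roots of $Q_{n-1}$ as the $\mu_a$ with $\nu_a=n-1$, and so on down to $Q_1$; assign $\nu_a\in\{1\sco n\}$ accordingly and set $\la_a=\mu_a+\nu_a$. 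Since we only need $\la_a\in\CC$ we may, after the fact, add a common integer shift or use the freedom in choosing the $\mu_a$ to arrange that $\la=(\la_1\sco\la_m)$ is dominant in the sense of \eqref{domcon} — e.g. by ordering the $\mu_a$ so that $\la_a$ is non-increasing in $a$, which makes $\la_a-\la_b\ge0$ for $a<b$. With this data, formula \eqref{hw} gives exactly $A_i(u)=Q_i(u+1)/Q_i(u)$, so $\Psi^\la_\mu$ has the prescribed highest-weight series; by the uniqueness statement in Subsection~1.3, $\Psi\cong\Psi^\la_\mu$.

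The main obstacle I anticipate is the bookkeeping in the previous paragraph: one must verify that an arbitrary divisibility chain of monic polynomials really is of the form $Q_i(u)=\prod_{a:\nu_a\ge i}(u-\mu_a)$ for some $m$, some multiset of complex numbers $\mu_a$, and some exponents $\nu_a\in\{1\sco n\}$, and simultaneously that the $\mu_a$ can be chosen so that $\la=\mu+\nu$ satisfies \eqref{domcon}. The divisibility $Q_i\mid Q_{i-1}$ is exactly what allows the inductive peeling-off of roots, and the exponent bound $\nu_a\le n$ holds automatically because the chain has length $n$; dominance is then arranged by relabelling the index set $\{1\sco m\}$, which does not change the tensor factors appearing in \eqref{starat} up to reordering and hence does not affect the isomorphism class of $\Psi^\la_\mu$ (here one invokes, as in the introduction, that permuting $\mu$ by a $\si$ fixing $\la$ leaves $\Psi^\la_\mu$ unchanged — or more simply, that we are free to choose any dominant $\la$ realizing the given $A_i(u)$). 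Once this combinatorial matching is in place, the corollary follows.
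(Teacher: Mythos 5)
Your proposal is correct and follows essentially the same route as the paper's own proof in Subsection~\ref{sec:pf-hw-if}: extract the pairs $(\nu_a,\mu_a)$ from the roots of the chain of monic polynomials $Q_i$ provided by the \emph{only if} part of Theorem~\ref{prop-hw}, permute the coordinates to make $\la$ dominant, and identify the given module with $\Psi^\la_\mu$ through the highest-weight series \eqref{hw} (justified by \eqref{ivlm}) and the uniqueness statement of the classification. The only slight imprecision is the phrase ``ordering the $\mu_a$ so that $\la_a$ is non-increasing'', which is not meaningful for complex $\la_a$; since \eqref{domcon} only constrains pairs whose difference is an integer, one orders within each coset $\la_a+\ZZ$, which is exactly the freedom the paper invokes when it chooses the permutation making the weight dominant.
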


For any $\sigma\in\S_m$ such that both $\la$ and $\sigma(\la)$ are 
dominant, the proof of \cite[Proposition~2.9]{KNP} yields
an isomorphism 
$$
\Psi^\la_\mu\,\to\,\Psi_{\sigma(\mu)}^{\sigma(\la)}
$$
of $\YY$-modules. Further suppose that all coordinates of 
the weight $\nu$ are positive, see Proposition \ref{prop-mod}(iii).
Then take another irreducible polynomial $\YY$-module $\Psi_{\mup}^{\la'}$
where $\la'$ is dominant and all coordinates of the weight $\la'-\mup$ are
positive integers not exceeding~$n\,$.
The $\YY$-modules $\Psi_\mu^\lambda$ and 
$\Psi_{\mup}^{\la'}$ are isomorphic
if and only if $\la\,,\mu$ and $\la',\mup$ are weights of the {same} 
$\gl_m$ and there exists $\sigma\in\S_m$ such that
$\la'=\sigma(\la)$ and $\mup=\sigma(\mu)\,$. 


\smallskip\section{Rational modules}


\subsection{}
\label{sec:rat}

Take a sequence of signs
$\sign=(\sign_1,\ldots,\sign_m)$ where $\sign_a\in\{-1,+1\}$
for each $a=1\sco m\,$. For any given $a,b=1\sco m$
let $\EE_{ab}$ be a
differential operator on $\G_{mn}$ of order at most two,
\begin{equation*}
\label{e}
\EE_{ab}\,=\,\suml{i=1}{n}\,q_{ai}\,p_{\ts bi}
\end{equation*}
where
\beq
\label{qp}
q_{ai} =
\left\{
\begin{array}{rl}
x_{ai}&\text{if}\quad\sign_a=+1
\\
\partial_{ai} &\text{if}\quad\sign_a=-1
\end{array}
\right.
\qquad\text{and}\qquad
p_{bi} =
\left\{
\begin{array}{rl}
\partial_{bi} &\text{if}\quad\sign_b=+1
\\
x_{bi}&\text{if}\quad\sign_b=-1\,.
\end{array}
\right.
\eeq

Now take any two indices $a$ and $b$ such that $a<b\,$.
Let $\aw\in\h^*$ be any weight such that
$\aw_a-\aw_b\neq-1,-2,\ldots\,\,$. 
Define the linear operators 
$X^{\aw\ts\sign}_{ab}$ and $Y^{\aw\ts\sign}_{ab}$
on the vector space $\G_{mn}$ by 
$$
X^{\aw\ts\sign}_{ab}\,=\,1+\sum_{r=1}^\infty\,
\frac{(-1)^r(\EE_{ba})^{\ts r}\,(\EE_{ab})^{\ts r}}
{r!\,(\aw_a-\aw_b+1)_{\ts r}}
\qquad\text{and}\qquad
Y^{\aw\ts\sign}_{ab}\,=\,1+\sum_{r=1}^\infty\,
\frac{(-1)^r(\EE_{ab})^{\ts r}\,(\EE_{ba})^{\ts r}}
{r!\,(\aw_a-\aw_b+1)_{\ts r}}
$$
where 
we again use the Pochhammer function 
$(u)_{\ts r}=u\,(u+1)\ldots(u+r-1)\,$.

Take any two weights $\la,\mu\in\h^*$ such that 
each coordinate $\nu_a$ of the difference
$\nu=\la-\mu$ is an integer with the absolute value
$|\nu_a|\le n\,$. Consider the corresponding
standard rational $\YY$-module $\Mlm\,$.
By the definition \eqref{starat}
its underlying vector space is 
\begin{equation}
\label{ratspace}
\La^{|\nu_1|}(\Cbb^n)\sot\La^{|\nu_m|}(\Cbb^n)\,.
\end{equation}
Denote by $|\nu|$ the weight 
$(|\nu_1|\sco|\nu_m|)$ of $\gl_m\,$.
We will use the linear map $\al_{|\nu|}$
to identify the vector space \eqref{ratspace}
with the weight subspace 
$\G_{mn}^{\ts|\nu|}\subset\G_{mn}\,$, see the definition \rf{anu}.

Choose the sequence
$\sign = \hr{\sign_1, \sign_2 \sco \sign_m}$ 
so that $\sign_a$ is $+1$ or $-1$ depending on whether
$\nu_a\ge0$ or $\nu_a<0\ts$. 
Define the weights $\lap$ and $\nup$ of $\gl_m$
by setting $\lap=\mu+\nup$ where
$$
\nup_a=
\left\{
\begin{array}{cl}
\nu_a&\text{if}\quad\nu_a\ge0 
\\
n+\nu_a&\text{if}\quad\nu_a<0\,.
\end{array}
\right.
$$
Then denote
\beq
\label{Nb}
\Nb\ =\sum_{1\le a<b\le m}\nup_a\,\nup_b\,.
\eeq

Suppose that $\lap$ is dominant. Note that here we
do not require the dominance of $\la$~itself. Define a linear map 
$$
I_\mu^\la\,:\Mlm\,\to\,\M_{\,\sigma_0(\mu)}^{\,\sigma_0(\la)}
$$
as the composition
\beq
\label{ri}
I_\mu^\la\,=\,(-1)^{\Nb}\,
\al_{\sigma_0\ts|\nu|}^{\,-1}\, 
\sigma_0\,Z^\la_\mu\, 
\al_{|\nu|}
\eeq
where now
$$
Z^\la_\mu\ =
\prod_{1\le a<b\le m}^{\longrightarrow}
\left\{
\begin{array}{cl}
\,X_{ab}^{\lap\ts\sign}&\text{if\ }\;\;\nup_a\ge\nup_b
\\[4pt]
\,Y_{ab}^{\mu\ts\sign}&\text{if\ }\;\;\nup_a<\nup_b
\end{array}
\right.
$$
and the ordering of the factors in the product
over the pairs of indices $a<b$ corresponds to any normal ordering
of the positive roots of the Lie algebra $\gl_m\,$.
The next theorem extends our Proposition \ref{prop-i} 
to rational $\YY$-modules. 
The proof will be given in Subsection~\ref{sec:pf-ri}. 

\begin{theorem}
\label{prop-ri} 
For any $\la,\mu\in\h^*$ such that $\lap$ is dominant and all 
coordinates of $\la-\mu$ are integers with 
absolute values not exceeding $n\,$,
the operator $I_\mu^\la$ defined by \eqref{ri}\,:

\smallskip

(i) is not zero\ts;

(ii) does not depend on the choice of the normal ordering\ts;

(iii) commutes with the action of the Yangian $\YY\,$;

(iv) has the image irreducible under the latter action. 
\end{theorem}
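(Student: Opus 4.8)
The strategy is to reduce Theorem~\ref{prop-ri} to the already-established Proposition~\ref{prop-i} by means of the twisting isomorphism of Proposition~\ref{prop-mod}(v). Concretely, I would interpret the operator $Z^\la_\mu$ in \eqref{ri} as the operator $Z$ built from the \emph{same} formulas as in \eqref{z}, but applied to a standard \emph{polynomial} module whose weights are obtained from $\la,\mu$ by replacing $\nu_a$ by $\nup_a$ in every coordinate where $\sign_a=-1$. Indeed, observe that under the substitution $q_{ai},p_{bi}$ of \eqref{qp}, the operator $\EE_{ab}$ on $\G_{mn}$ acting on a variable $x_{ai}$ with $\sign_a=-1$ behaves exactly like $\E_{ab}$ would act on the \emph{complementary} exterior power: the map $x_{ai}\mapsto\partial_{ai}$ intertwines $\La^d(\CC^n)$ with $\La^{n-d}(\CC^n)$ up to sign, which is precisely the isomorphism $\Phi^{-d}_z\cong\Phi^{n-d}_z\otimes\Dep_z$ of Proposition~\ref{prop-mod}(v). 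Thus the first step is to make this correspondence precise as a vector-space isomorphism $\Mlm\to\M^{\lap}_{\mu}$ (the target being a standard \emph{polynomial} module) intertwining the $X^{\aw\ts\sign}_{ab},Y^{\aw\ts\sign}_{ab}$ operators with the $X^{\aw}_{ab},Y^{\aw}_{ab}$ operators of Section~2, carrying $\al_{|\nu|}$ to $\al_{\nup}$ and $\Nb$ to the corresponding $N$.

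**Second step: transport the Yangian action.** Having identified the operator $Z^\la_\mu$ of \eqref{ri} with the polynomial operator $Z^{\lap}_\mu$ of \eqref{z} for the weights $\lap=\mu+\nup$, I would apply Proposition~\ref{prop-i} directly: since $\lap$ is dominant by hypothesis, parts (i)--(iv) of that proposition give that the composite $(-1)^{N}\al_{\sigma_0(\nup)}^{-1}\sigma_0 Z^{\lap}_\mu \al_{\nup}$ is a non-zero, normal-ordering-independent, $\YY$-intertwining map $\M^{\lap}_\mu\to\M^{\sigma_0(\lap)}_{\sigma_0(\mu)}$ with irreducible image. The point is then that the twisting isomorphisms are themselves compatible with the action of $\YY$: the vector-space identification of step~1, when interpreted on the level of $\YY$-modules, is a tensor product of the isomorphisms $\Phi^{-|\nu_a|}_{\mu_a}\cong\Phi^{n-|\nu_a|}_{\mu_a}\otimes\Dep_{\mu_a}$, hence equals the original module twisted by a cocentral one-dimensional factor (Proposition~\ref{prop-mod}(iv)); and a cocentral twist changes none of the conclusions (i)--(iv). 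One has to check that the sign factor $(-1)^{\Nb}$ versus $(-1)^{N}$ and the interchange of $\al_{|\nu|}$ with $\al_{\nup}$ at source and target are consistent with this twist, which is a bookkeeping matter once the conventions in \eqref{anu} are unwound.

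**Third step: highest-weight data and irreducibility.** For completeness I would record that the highest vector of $\Img I_\mu^\la$ and its series $A_i(u)$ can be read off from the polynomial case via Proposition~\ref{prop-mod}(v): each covector tensorand contributes the series of $\Phi^{\,n-|\nu_a|}_{\mu_a}$ multiplied by that of $\Dep_{\mu_a}$, and since $\Dep_z$ is cocentral its series is of the shape dictated by \eqref{pn} with a denominator, which is exactly what produces a \emph{rational} (rather than polynomial) $Q_n(u)$ and matches Theorem~\ref{prop-hw}. Irreducibility of the image, part (iv), is inherited because tensoring an irreducible $\YY$-module with a one-dimensional cocentral module preserves irreducibility.

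**Main obstacle.** The substantive difficulty is the first step: verifying that the partial-derivative substitution $x_{ai}\mapsto\partial_{ai}$ really does intertwine the \emph{nonlinear} operators $X^{\aw\ts\sign}_{ab},Y^{\aw\ts\sign}_{ab}$ (which involve arbitrarily high powers $(\EE_{ba})^r(\EE_{ab})^r$ and Pochhammer denominators depending on $\aw_a-\aw_b$) with their unsigned counterparts on the complementary exterior powers, \emph{and} that the relevant weight shift is precisely $\nu_a\mapsto n+\nu_a$ so that $\aw=\lap$ is the weight that must be dominant. Here the care needed is that on $\G_{mn}$ the operators $\E_{ba},\E_{ab}$ do not simply conjugate into $\E_{ab},\E_{ba}$ under $x\leftrightarrow\partial$; rather there is a shift by $n$ coming from the Weyl-algebra relation $\d_{bk}x_{bk}=x_{bk}\d_{bk}+1$ summed over the $n$ values of $k$, and this is exactly the source of the $n+\nu_a$ in the definition of $\nup_a$. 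Making this shift explicit — most cleanly by realizing the covector module $\Phi^{-1}_z$ inside a Grassmann algebra and computing the action of $\E_{ab}$ directly from \eqref{covectorrep} — is where the real work of the proof lies; everything after that is a transfer of the polynomial results through Proposition~\ref{prop-mod}.
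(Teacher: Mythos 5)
Your plan is essentially the paper's own proof: the paper conjugates the polynomial intertwiner $I_\mu^{\ts\lap}$ (with $\lap$ dominant, Proposition \ref{prop-i}) by $\YY$-module isomorphisms $J^{\ts\la}_\mu$ realizing Proposition \ref{prop-mod}(v) on the tensor product, built from the substitution automorphism $S_\sign$ of $\D_{mn}$ ($x_{ai}\mapsto q_{ai}$, $\d_{ai}\mapsto p_{ai}$) and the map $R_{\ts\sign}$, with the cocentral one-dimensional factors $\Dep_{\mu_a}$ absorbing the twist and the signs checked so that the composite equals $(-1)^{K+L}I_\mu^\la$. The step you single out as the main obstacle is in fact immediate in that setup: since $S_\sign$ is an algebra automorphism with $S_\sign(\E_{ab})=\EE_{ab}$ and the Pochhammer parameters are scalars, the operators $X^{\aw\ts\sign}_{ab},Y^{\aw\ts\sign}_{ab}$ are literally $S_\sign$-images of $X^{\aw}_{ab},Y^{\aw}_{ab}$, and the passage to $\nup$ (hence to dominance of $\lap$) comes only from $S_\sign(\E_{aa})=n-\E_{aa}$ on rows with $\sign_a=-1$, i.e.\ from the Grassmann anticommutation relations rather than the bosonic Weyl relation you quote.
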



\subsection{}

Let us describe the highest vector and the series $A_1(u)\sco A_n(u)$
for the irreducible $\YY$-module defined as the image of the
operator \eqref{ri}. We will still denote this module
by $\Psi^\la_\mu$ thus generalizing the notation \eqref{llamu}.

For any positive integer $d\le n$
consider the $\gl_n$-module obtained by pulling
$\Lambda^d(\CC^n)$ back through the
restriction of the automorphism \eqref{omega}
to the subalgebra $\U(\gl_n)\subset\YY\,$. Let
$$
v_{-d}=e_{n-d+1}\wedge\ldots\wedge e_n
$$
be the highest vector of the resulting $\gl_n$-module.
Recall that the highest vector of $\Lambda^d(\CC^n)$ itself is 
\eqref{vd}. Define the vector $\vlm\in\Mlm$
as in \eqref{vlm} where every coordinate
$\nu_a$ can also be negative now. Then 
\eqref{tijvlm} and \eqref{tiivlm} still hold,
however now for each $i=1\sco n$
\begin{equation}
\label{rhw}
A_i(u)\ =
\prods{a\,:\,\nu_a\ts<\ts i-n}\frac{u-\mu_a}{u-\mu_a+1}
\ \ \cdot
\prods{a\,:\,\nu_a\ts\ge\ts i}\frac{u-\mu_a+1}{u-\mu_a}\ .
\end{equation}

In the course of proving our Theorem \ref{prop-i}
we will show that the equalities \eqref{ivlm},\eqref{vlms}
also hold now. Hence 
$v_{\sigma_0(\mu)}^{\sigma_0(\la)}$ 
is the highest vector of $\Psi_{\mu}^\la$
and the corresponding series $A_1(u)\sco A_n(u)$
are given by \eqref{rhw}. 
It now follows that for $\Psi_{\mu}^\la$ 
\begin{align*}
P_i(u)\ 
&=\prods{a\,:\,\nu_a=i\ts,\ts i-n}(\ts u-\mu_a)
\ \quad\text{where}\ \quad
i=1\sco n-1\ts;
\\
Q_i(u)\ 
&=\prods{a\,:\,\nu_a\ts<\ts i-n}\frac1{u-\mu_a}
\ \,\cdot
\prods{a\,:\,\nu_a\ge i}(\ts u-\mu_a)
\ \quad\text{where}\ \quad
i=1\sco n\ts.
\end{align*}
While proving
Theorem~\ref{prop-hw} we will also obtain the following corollary to
Theorem~\ref{prop-ri}.

\begin{corollary}
\label{prop-rirrep}
The $\Psi^\la_\mu$ exhaust all the non-trivial
irreducible rational $\YY$-modules.
\end{corollary}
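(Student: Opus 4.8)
The plan is to reduce the statement about rational irreducible $\YY$-modules to the already-established Corollary \ref{prop-irrep} about polynomial ones, using Proposition \ref{prop-mod}(v) together with Theorem \ref{prop-ri}(iv). First I would note that by Theorem \ref{prop-ri} each $\Psi^\la_\mu$ arising as the image of the operator \eqref{ri} is an irreducible rational $\YY$-module, and it is non-trivial because its series $A_i(u)$ in \eqref{rhw} are genuinely non-constant unless all $\nu_a=0$ (and the $\nu_a=0$ case gives the trivial module, which we exclude). So one inclusion is immediate from Theorem \ref{prop-ri}. The content is the reverse inclusion: every non-trivial irreducible rational $\YY$-module is isomorphic to some $\Psi^\la_\mu$ of the form \eqref{ri}.

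For the reverse inclusion, let $\Psi$ be any non-trivial irreducible rational $\YY$-module, with highest-weight series $A_1(u)\sco A_n(u)$ and Drinfeld polynomials $P_1(u)\sco P_{n-1}(u)$. By Theorem \ref{prop-hw} the rationality of $\Psi$ is equivalent to $A_n(u)=Q_n(u+1)/Q_n(u)$ for a rational function $Q_n(u)$, which we may write as a ratio of monic polynomials; hence by \eqref{qi} every $Q_i(u)=P_i(u)\cdots P_{n-1}(u)\,Q_n(u)$ is a ratio of two (not necessarily coprime) monic polynomials. Writing the zeros and poles of $Q_n(u)$ as finite multisets of complex numbers, one reads off a weight $\nu\in\h^*$ for a suitable $\gl_m$ (with $m$ the total number of zeros and poles, counted with multiplicity) in which each coordinate $\nu_a$ is a non-zero integer with $|\nu_a|\le n$: a zero at $u=\mu_a$ contributes a "vector" slot and a pole contributes a "covector" slot, and the $i$ for which $\nu_a\ge i$ or $\nu_a<i-n$ are arranged, via \eqref{rhw}, so that the product in \eqref{rhw} reproduces the prescribed $A_i(u)$. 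This is essentially the same bookkeeping as in the polynomial case, now allowing the inverted factors coming from the covector modules; I would do this matching carefully but it is combinatorially routine. Having fixed $\nu$ and the complex numbers $\mu_a$, set $\lap=\mu+\nup$ with $\nup$ as defined in Subsection \ref{sec:rat}; by reordering the slots (which only permutes tensor factors and so does not change the isomorphism class, by the comultiplication \eqref{Delta} and the standard $R$-matrix argument, or by the isomorphism quoted after Corollary \ref{prop-irrep}) we may assume $\lap$ is dominant. Then $\Mlm$ is a standard rational module whose highest-weight series match those of $\Psi$, so by Theorem \ref{prop-ri}(iv) the submodule $\Psi^\la_\mu=\Img I^\la_\mu$ is an irreducible rational module with the same series $A_1(u)\sco A_n(u)$ as $\Psi$; since an irreducible finite-dimensional $\YY$-module is determined up to isomorphism by these series, $\Psi\cong\Psi^\la_\mu$.

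The main obstacle is the middle step: showing that one can always choose $m$, the weight $\nu$, and the parameters $\mu_a$ so that the prescribed series $A_1(u)\sco A_n(u)$ of an arbitrary rational irreducible module are realized by \eqref{rhw} with $\lap$ dominant. The delicate points are (a) checking that a rational $Q_n(u)$ forces each $\nu_a\ne0$ to satisfy $|\nu_a|\le n$ — this uses \eqref{qi} and the fact that consecutive ratios $A_i(u)/A_{i+1}(u)$ are genuine polynomial ratios $P_i(u+1)/P_i(u)$, so that the same linear factor cannot appear in the numerator of $A_i$ and the denominator of $A_{i+n}$ with a larger gap — and (b) arranging the dominance of $\lap$ simultaneously with the matching; here one invokes the permutation isomorphism $\Psi^\la_\mu\to\Psi^{\sigma(\la)}_{\sigma(\mu)}$ (valid whenever $\lap$ and $\sigma(\lap)$ are both dominant, by \cite[Proposition 2.9]{KNP} as extended in Subsection \ref{sec:pf-ri}) to replace any chosen ordering by a dominant one. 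Once this dictionary between rational functions $A_n(u)$ and pairs $(\la,\mu)$ is set up, everything else is a formal consequence of Theorem \ref{prop-ri} and the uniqueness of irreducibles with given $A_i(u)$.
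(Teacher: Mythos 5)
Your overall route is the one the paper itself takes in Subsection \ref{sec:pf-hw-if}: apply the \emph{only if} part of Theorem \ref{prop-hw} to write $A_n(u)=Q_n(u+1)/Q_n(u)$ with $Q_n$ a ratio of coprime monic polynomials, turn the zeros and poles into a list of pairs $(\nu_a,\mu_a)$, order the list so that $\lap$ is dominant, and conclude from \eqref{rhw}, \eqref{ivlm} and Theorem \ref{prop-ri} together with the fact that the series $A_1(u),\ldots,A_n(u)$ determine the irreducible module up to isomorphism. So the skeleton is right and matches the paper.

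The step you defer as ``combinatorially routine'' is, however, the only real content of the construction, and the hint you give for it would fail. You propose that each pole of $Q_n$ contribute a ``covector'' slot. But a slot with $\nu_a=i-n$ for some $i\in\{1,\ldots,n-1\}$ (in particular a covector slot $\nu_a=-1$, which has $i=n-1$) necessarily contributes the factor $(u-\mu_a)$ to the Drinfeld polynomial $P_i$ of $\Psi^\la_\mu$, by the formulas following \eqref{rhw}; since every slot contributes multiplicatively and zeros cannot be cancelled, this corrupts the prescribed $P_1,\ldots,P_{n-1}$ unless they happen to vanish at $\mu_a$. The choice that works in general --- and the one the paper makes --- is $\nu_a=-n$ for every pole of $Q_n$, i.e.\ a determinantal slot $\Delta'_{\mu_a}$, which contributes to no $P_i$ with $1\le i\le n-1$ and only a pole to $Q_n$; zeros of $P_i$ are realized by slots with $\nu_a=i$, zeros of the numerator of $Q_n$ by slots with $\nu_a=n$. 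With this choice the matching is immediate, and your worry (a) evaporates: nothing is ``forced'' about $|\nu_a|\le n$, since you are choosing $\nu$, not reconstructing it from $\Psi$. Dominance of $\lap$ is then arranged simply by ordering the pairs (within each class of coordinates congruent modulo $\ZZ$, list them so the differences read left to right are non-negative); no $R$-matrix or permutation-isomorphism argument is needed. Indeed, be careful with your justification there: permuting the tensor factors of $\Mlm$ does \emph{not} in general preserve its isomorphism class as a $\YY$-module --- what saves you is only that the series \eqref{rhw} are symmetric in the slots, so the ordering is fixed before the module is formed, exactly as in the paper. (A small side remark: your claim that $\Psi^\la_\mu$ is non-trivial whenever some $\nu_a\neq0$ is false --- e.g.\ $\nu=(n,-n)$, $\mu_1=\mu_2$ gives all $A_i(u)=1$ --- but this direction is not needed for the corollary as stated.)
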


There is no uniqueness 
in realizing a given non-trivial irreducible
rational $\YY$-module as some $\Psi^\la_\mu$ where
$\lap$ is dominant and all coordinates of 
the weight $\la-\mu$ are non-zero integers with absolute values
not exceeding~$n\ts$. Take Proposition~\ref{prop-mod}(v) 
with $0<d<n$~as~an~example:
\begin{align*}
\Phi^{-d}_z&=
\Psi^\la_\mu
\quad\text{where}\quad
m=1\ts,\quad
\la_1=z-d
\quad\text{and}\quad
\mu_1=z\ts;
\\[2pt]
\Phi^{\ts n-d}_z\otimes\Dep_z&=
\Psi^\la_\mu
\quad\text{where}\quad
m=2\ts,\quad
(\la_1,\la_2)=(z+n-d,z-n)
\quad\text{and}\quad
(\mu_1,\mu_2)=(z,z)\ts.
\end{align*}
All choices of $\la$ and $\mu$ with minimal number of coordinates 
$m$ are described in 
Subsection~\ref{sec:pf-hw-if}. 


\bigskip\section{The proofs}


\subsection{}
\label{sec:pf-mod}

Here we prove Proposition~\ref{prop-mod}.
Part (i) is well known, see for instance
\cite[Lemma~2.1]{KNP}. Part (ii) immediately follows from (i)
because $\om\,\comp\,\tau_z=\tau_{-z}\,\comp\,\om\,$.
By definition, the trivial $\YY$-module is one-dimensional
and is defined by the counit homomorphism $\YY\to\CC\,$.
Composition of the homomorphism $\pi_z:\YY\to\U(\gl_n)$
with the representation of $\U(\gl_n)$ in $\La^0(\Cbb^n)$
coincides with the counit. Therefore we get (iii). Further,
the cocentrality in (iv) means that
for any $\YY$-module $\rm M$ the flip
of two tensor factors yields isomorphisms  
$$
{\rm M}\otimes\De_z\,\to\,\De_z\otimes{\rm M}
\quad\text{and}\quad
{\rm M}\otimes\Dep_z\,\to\,\Dep_z\otimes{\rm M}\,.
$$
Thus (iv) follows from the definition \eqref{Delta}
of the comultiplication and an observation that
the defining homomorphisms of one-dimensional
$\YY$-modules \eqref{detmod} are given respectively~by
$$
T_{ij}(u)\,\mapsto\,\de_{ij}\,\frac{u-z+1}{u-z}
\ \quad\text{and}\ \quad
T_{ij}(u)\,\mapsto\,\de_{ij}\,\frac{u-z}{u-z+1}\,\,.
$$
Using the latter description of the $\YY$-module $\Dep_z\,$,
the (v) follows from \cite[Lemma 2.3]{KNP}.
However, we will still include another proof of (v) here. 
It will be then used in Subsection~\ref{sec:pf-ri}.

Let $\G_n$ be the Grassmann algebra on $n$ variables $x_1 \sco x_n\,$.
For each $d=0,1\sco n$ let $\G_n^{\ts d}\subs\G_n$ be the subspace
of degree $d$. Define a bijective linear map
$\al_d:\La^d(\Cbb^n)\to\G_n^{\ts d}$ by 
$$
\al_d\,(e_{i_1}\sw e_{i_d})=x_{i_1}\ldots\ts x_{i_d}\,.
$$
Let us carry the structures of $\YY$-modules from 
$\Phi^{-d}_z$ and $\Phi^{\ts n-d}_z\otimes\Dep_z$
to the vector spaces $\G_n^{\ts d}$ and $\G_n^{\ts n-d}\otimes\G_n^{\ts n}$
via the maps $\al_d$ and $\al_{n-d}\otimes\al_n$ respectively.
The action of the Yangian $\YY$ on the resulting two modules can be
described by the assignments
$$
T_{ij}(u)
\mapsto
\de_{ij}-\frac{x_j\ts\d_i}{u-z+1}
\quad\text{and}\quad
T_{ij}(u)
\mapsto
\Bigl(\ts\de_{ij}+\frac{x_i\ts\d_j}{u-z}\,\Bigr)
\otimes
\frac{u-z}{u-z+1}
$$
respectively, 
where $\d_i$ is the operator of left derivation in $\G_n$ relative to $x_i\,$.

Let $\D_n$ be the ring of left 
differential operators on the Grassmann algebra~$\G_n\,$.
Denote by $S$ the involutive automorphism of $\D_n$ defined by setting
$S(x_i)=\d_i$ for $i=1\sco n\,$. Put
$$
R(x)=S(x)\cdot x_1\ldots x_n
\quad\text{for}\quad x\in\G_n\,.
$$
Note that the operator 
$R^{\ts2}$ is $(-1)^{n\ts(n-1)/2}$ times the identity map.
Then we have the relation
\beq
\label{rs}
R\ts(\ts Y(x))=S(Y)(R(x))
\quad\text{for}\quad Y\in\D_n\,.
\eeq
Indeed, if $Y$ is the operator of multiplication
by any $y\in\G_n$ then we get \eqref{rs} because
$S$ is a homomorphism:
$$
R\ts(y\ts x)=S(y\ts x)\cdot x_1\ldots x_n=S(y)(R(x))\,.
$$
By substituting here $R^{\ts-1}(x)$ for $x\ts$, switching 
the left and the right hand sides of the resulting equality,
and then applying $R$ to both sides we get 
$$
R(\ts S(y)(x))=y\ts R(x)=S(S(y))(R(x))\,.
$$
Thus we also get \eqref{rs} for $Y=S(y)$. Since the operators
$Y=y$ and $Y=S(y)$ generate the ring $\D_n$ we obtain 
the relation \eqref{rs} in general.

An isomorphism of $\YY$-modules 
$\G_n^{\ts d}\to\G_n^{\ts n-d}\otimes\G_n^{\ts n}$
can now be defined by mapping any element $x\in\G_n^{\ts d}$ to the element 
$R(x)\otimes(\ts x_1\ldots x_n)\,$.
Due to the relation \eqref{rs}
the intertwining property of this map follows from the operator equality 
$$
\de_{ij}-\frac{x_j\ts\d_i}{u-z+1}
\,=\,
\frac{u-z}{u-z+1}\,
\Bigl(\ts\de_{ij}+\frac{\d_i\ts x_j}{u-z}\ts\Bigr)
\,.
$$
Hence the $\YY$-modules $\G_n^{\ts d}$ and 
$G_n^{\ts n-d}\otimes\G_n^{\ts n}$ are isomorphic.
By the definition of these two modules,
we now get the part (v) of Proposition~\ref{prop-mod}.


\subsection{}
\label{sec:pf-hw}

In this subsection we prove the \emph{only if}
part of Theorem~\ref{prop-hw}. This part follows from 
a more general property of $\YY$-modules stated as the lemma below. Let 
$$
\M=\Phi^{\ts\sign_1}_{z_1}\otimes\ldots\otimes\Phi^{\ts\sign_m}_{z_m}
$$ 
be the tensor product of any number $m$ of the vector and covector
$\YY$-modules. Here we have $\sign_a\in\{-1,1\}$ and $z_a\in\CC$
for each index $a=1\sco m\,$.

\begin{lemma} 
\label{lemma}
For any index $i$ all eigenvalues of\/ $T_{ii}(u)$ on
the $\YY$-module\/ $\M$~have~the~form
$$
\prods{a\,\in\ts I}\,\frac{u-z_a+1}{u-z_a} 
\ \cdot\  
\prods{a\,\in\ts J}\,\frac{u-z_a}{u-z_a+1}
$$
where $I$ and $J$ are subsets of the sets
of indices $a$ such that\/ 
$\sign_a=1$ and\/ $\sign_a=-\ts1$ respectively.
\end{lemma}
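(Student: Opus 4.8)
The natural approach is induction on the number $m$ of tensor factors, exploiting the fact that the coproduct \eqref{Delta} is compatible with tensor products. For $m=1$ the claim is immediate: on the vector module $\Phi^1_{z_1}$ formula \eqref{vectorrep} gives $T_{ii}(u)\,e_k=(\de_{ik}(u-z_1)^{-1}+1)\,e_k$, so the eigenvalues are $1$ (with $I=\varnothing$) and $\tfrac{u-z_1+1}{u-z_1}$ (with $I=\{1\}$), and similarly \eqref{covectorrep} gives eigenvalues $1$ and $\tfrac{u-z_1}{u-z_1+1}$ on the covector module. For the inductive step, write $\M=\M'\otimes\Phi^{\sign_m}_{z_m}$ where $\M'=\Phi^{\sign_1}_{z_1}\otimes\cdots\otimes\Phi^{\sign_{m-1}}_{z_{m-1}}$, and use \eqref{Delta} to express
$$
T_{ii}(u)\,=\,\sum_{k=1}^n T_{ik}(u)\otimes T_{ki}(u)
$$
as an operator on $\M'\otimes\Phi^{\sign_m}_{z_m}$.

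\textbf{Key step: reducing to a triangular action.} The point is that on the last factor $\Phi^{\sign_m}_{z_m}$ the operators $T_{ki}(u)$ for $k\neq i$ act in a very restricted way. If $\sign_m=1$, then by \eqref{vectorrep} the operator $T_{ki}(u)$ with $k\ne i$ sends $e_i\mapsto (u-z_m)^{-1}e_k$ and kills $e_j$ for $j\ne i$; in particular $T_{ii}(u)\,e_i=\tfrac{u-z_m+1}{u-z_m}\,e_i$ and $T_{ii}(u)\,e_j=e_j$ for $j\ne i$, and $T_{ki}(u)\,e_j=0$ for $j\ne i$ whenever $k\ne i$. Therefore, ordering the basis of $\Phi^1_{z_m}$ with $e_i$ last, the matrix of $T_{ii}(u)=\sum_k T_{ik}(u)\otimes T_{ki}(u)$ on $\M'\otimes\Phi^1_{z_m}$ is block upper-triangular with respect to the decomposition $\M'\otimes\lspan\{e_j: j\neq i\}$ and $\M'\otimes\lspan\{e_i\}$: on the first block it acts as $T_{ii}(u)\otimes\id$, and on the second block (which is $T_{ii}(u)$-invariant) it acts as $T_{ii}(u)\otimes\tfrac{u-z_m+1}{u-z_m}$. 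Hence the eigenvalues of $T_{ii}(u)$ on $\M$ are exactly the eigenvalues of $T_{ii}(u)$ on $\M'$, together with those same eigenvalues multiplied by $\tfrac{u-z_m+1}{u-z_m}$; by the inductive hypothesis these all have the required form, with $I$ enlarged by $m$ in the second case. The case $\sign_m=-1$ is the mirror image: by \eqref{covectorrep}, $T_{ii}(u)\,e_i=e_i$ and $T_{ii}(u)\,e_j=\tfrac{u-z_m}{u-z_m+1}\,e_j$ for $j\ne i$, while $T_{ki}(u)\,e_j$ for $k\neq i$ is supported on $j=k$; ordering the basis with $e_i$ first gives a block upper-triangular matrix, and the eigenvalues of $T_{ii}(u)$ on $\M$ are the eigenvalues on $\M'$ together with those multiplied by $\tfrac{u-z_m}{u-z_m+1}$, i.e. with $J$ enlarged by $m$.

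\textbf{Main obstacle.} The only genuinely delicate point is verifying that the relevant subspace of $\M'\otimes\Phi^{\sign_m}_{z_m}$ is actually invariant under $T_{ii}(u)$, so that the block-triangular structure yields eigenvalues and not merely a filtration; one must check that the "off-diagonal" contributions $\sum_{k\ne i}T_{ik}(u)\otimes T_{ki}(u)$ really do vanish on the sub-block and only contribute across blocks in one direction. This is precisely what the explicit formulas \eqref{vectorrep} and \eqref{covectorrep} guarantee, since there $T_{ki}(u)$ with $k\ne i$ has one-dimensional image spanned by a single basis vector and one-dimensional support. So the proof amounts to: (1) treat $m=1$ directly; (2) in the inductive step, split off the last factor, use \eqref{Delta} together with the explicit action on the single remaining factor to put $T_{ii}(u)$ into block upper-triangular form; (3) read off that the spectrum on $\M$ is the spectrum on $\M'$ (unchanged) union the spectrum on $\M'$ scaled by $\tfrac{u-z_m+1}{u-z_m}$ or $\tfrac{u-z_m}{u-z_m+1}$ according to the sign $\sign_m$; (4) conclude by the inductive hypothesis. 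Reading off \eqref{hw} and \eqref{rhw} as the eigenvalues on the highest weight vectors $\vlm$ is then the special case where $I$ and $J$ are taken as large as possible.
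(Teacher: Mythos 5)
Your proof is correct and takes essentially the same route as the paper: induction on the number of tensor factors, splitting off the last factor and combining \eqref{Delta} with the explicit formulas \eqref{vectorrep}, \eqref{covectorrep} to put $T_{ii}(u)$ into block-triangular form, so that every eigenvalue on $\M$ is an eigenvalue on $\M^{\,\prime}$ multiplied by $1$, by $\frac{u-z_m+1}{u-z_m}$, or by $\frac{u-z_m}{u-z_m+1}$. One small correction: in the case $\sign_m=1$ the invariant subspace is $\M^{\,\prime}\otimes W_i$ with $W_i=\lspan\{e_j\,:\,j\neq i\}$, while $\M^{\,\prime}\otimes\lspan\{e_i\}$ only carries the quotient action (so your parenthetical claim that the second block is invariant is not literally true), but this does not affect the argument, since block upper-triangularity alone yields the stated description of the spectrum.
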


\noindent{\it Proof.}\quad
We will prove the lemma by induction on $m\,$.
In the base case $m=0$ the statement of the lemma is trivial.
Suppose that $m>0$ and the statement is true for $m-1$ instead of $m\,$.
We will consider the cases $\sign_m=1$ and $\sign_m=-\ts1$ separately.
In both cases we will denote
$$
\M^{\,\prime}=
\Phi^{\ts\sign_1}_{z_1}\otimes\ldots\otimes\Phi^{\ts\sign_{m-1}}_{z_{m-1}}\,.
$$

First suppose $\sign_m=1\,$. 
For any index $i=1\sco n$ let $W_i\subset\CC^n$ 
be the span of the vectors 
$e_1\sco e_{i-1}, e_{i+1} \sco e_n\,$.
Then by \rf{Delta},\rf{vectorrep}
for any vector $w\in\M^{\,\prime}$ and for~any~index~$k\neq i$
\begin{gather*}
T_{ii}(u)\,(w\otimes e_k)=(\,T_{ii}(u)\,w)\otimes e_k\,,
\\[4pt]
T_{ii}(u)\,(w\otimes e_i)=\frac{u-z_m+1}{u-z_m}\ 
(\,T_{ii}(u)\,w)\otimes e_i 
\quad\text{mod}\quad 
\M^{\,\prime}\otimes W_i\,[[\ts u^{-1}\ts]]\,.
\end{gather*}
In particular, the action of the coefficients of the series $T_{ii}(u)$
on $\M$ preserves the subspace $\M^{\,\prime}\otimes W_i\,$.
Hence in this case 
any eigenvalue of $T_{ii}(u)$ on $\M$ is equal to 
an eigenvalue of $T_{ii}(u)$ on $\M^{\,\prime}$
multiplied either by 1 or by 
$$
\frac{u-z_m+1}{u-z_m}\,.
$$

Next suppose that $\sign_m=-\ts1\,$. 
Then by \rf{Delta},\rf{covectorrep}
for any $w\in\M^{\,\prime}$ and for any $k\neq i$
\begin{gather*}
T_{ii}(u)\,(w\otimes e_k)=(\,T_{ii}(u)\,w)\otimes e_k
\quad\text{mod}\quad 
\M^{\,\prime}\otimes e_i\,[[\ts u^{-1}\ts]]\,,
\\[4pt]
T_{ii}(u)\,(w\otimes e_i)= 
\frac{u-z_m}{u-z_m+1}\ (\,T_{ii}(u)\,w)\otimes e_i\,.
\end{gather*}
In particular, the action of the coefficients of the series $T_{ii}(u)$
on $\M$ preserves the subspace $\M^{\,\prime}\otimes e_i\,$.
Hence in this case 
any eigenvalue of $T_{ii}(u)$ on $\M$ is equal to 
an eigenvalue of $T_{ii}(u)$ on $\M^{\,\prime}$
multiplied either by 1 or by 
$$
\frac{u-z_m}{u-z_m+1}\,.
\eqno{\square}
$$


\subsection{}
\label{sec:pf-hw-if}

In this subsection we will prove the \emph{if} part of
Theorem~\ref{prop-hw}. Then we will derive our
Corollaries \ref{prop-irrep} and \ref{prop-rirrep}.
Let $P_1(u)\sco P_n(u)$ and $P_{-n}(u)$
be any monic polynomials in $u$ with complex coefficients.
Let $Q_n(u)=P_n(u)/P_{-n}(u)\,$. Assume that the polynomials
$P_n(u)$ and $P_{-n}(u)$ have no common zeroes. 
Determine the series $A_1(u)\sco A_n(u)$ by \eqref{qi}.

We need to prove that the irreducible finite-dimensional $\YY$-module
corresponding to $A_1(u)\sco A_n(u)$ is rational, 
and is polynomial~if~$P_{-n}(u)=1\ts$. Write
$$
P_i(u)=\prodl{s=1}{m_i}\,(\ts u-z_{\ts is}\ts)
\qquad\text{for}\qquad i=1\sco n,-n\,.
\hspace{-20pt}
$$
Set $m=m_1+\ldots+m_n+m_{-n}\,$. Denote by $\Pc$ the set of $m$ pairs
$(i,z_{\ts is})$ where $s=1\sco m_i$ and $i=1\sco n,-n\,$.
Let $\la$ and $\mu$ be any two weights of $\gl_m$ such that 
for $\nu=\la-\mu$ the set
$$
\{\,(\ts\nu_a,\mu_a\ts)\,|\,a=1\sco m\,\}=\Pc\,.
$$
Define the weight $\lap$ as in Subsection \ref{sec:rat}. 
Note that if $\nu_a<0$ for any $a$ here, then $\nu_a=-\ts n\,$.
In particular, if $P_{-n}(u)=1$ then all the coordinates 
$\nu_a$ are positive integers not exceeding~$n\ts$.

Consider the corresponding $\YY$-module $\Mlm$ and its vector
$\vlm\,$. Note that the definition of this vector
does not require the dominance of the weight $\lap\,$.
Moreover, the equalities \eqref{tijvlm} and
\eqref{tiivlm} hold for any $\lap\,$, not necessarily dominant.
It now follows that the $\YY$-module $\Mlm$ has an irreducible 
subquotient such that its highest vector is the image of $\vlm\,$,
see for instance the proof of \cite[Theorem 2.16]{CP}.
The series $A_1(u)\sco A_n(u)$ corresponding to this 
irreducible subquotient are given by \eqref{qi}.
Hence we get the \emph{if} part of Theorem~\ref{prop-hw}. 

So far the weights $\la$ and $\mu$ have been determined
up to any simultaneous permutation of their coordinates.
We can now choose $\la$ and $\mu$ so that the weight $\lap$ is dominant.
Then due to \eqref{ivlm}
the irreducible subquotient of $\Mlm$ considered above
becomes the quotient relative to the kernel of the operator
$I_\mu^\la\,$.   
This quotient is isomorphic to $\Psi^\la_\mu\,$.
Corollaries~\ref{prop-irrep}~and~\ref{prop-rirrep} now
follow from Proposition \ref{prop-i} and Theorem \ref{prop-ri} respectively,
by the \emph{only if} part of 
Theorem~\ref{prop-hw}.

Note that if $P_{-n}(u)\neq1\ts$, then the irreducible 
module $\Psi^\la_\mu$
considered above does not necessarily have the minimal
number $m$ possible for the given polynomials $P_1(u)\sco P_{n-1}(u)$ and
the rational function $Q_n(u)\ts$. Suppose the set $\Pc$ 
contains the two pairs
$(d,z)$ and $(-n,z)$ for some $z\in\CC$ and $d>0\,$. 
Then $d<n$ because the polynomials
$P_n(u)$ and $P_{-n}(u)$ have no common zeroes.
Replace the two pairs $(d,z)$ and $(-n,z)$
in $\Pc$ by the single pair $(d-n,z)\,$. 
Let $\Pc^{\ts\prime}$ be any set obtained by
repeating this replacement step until possible. 
This $\Pc^{\ts\prime}$ may be not unique.
However, all resulting sets $\Pc^{\ts\prime}$
have the same size which will denote by $m^{\ts\prime}\ts$.
Let $\la'$ and $\mup$ be any weights of $\gl_{m'}$ such that 
for~$\nu^{\,\prime}=\la'-\mup$
$$
\{\,(\ts\nu^{\,\prime}_a\ts,\mup_a\ts)
\,|\,
a=1\sco m^{\ts\prime}\,\}=\Pc^{\ts\prime}\,.
$$
Assuming that $\overline{\la'}$ is dominant,
the irreducible $\YY$-module $\Psi_{\mup}^{\la'}$ 
has the same polynomials
$P_1(u)\sco P_{n-1}(u)$ and rational function $Q_n(u)$ as 
$\Psi^\la_\mu\,$.
In particular, it is isomorphic to $\Psi^\la_\mu\,$.


\subsection{}
\label{sec:pf-i-1}

This and the next three subsections are devoted to the proof of 
Proposition~\ref{prop-i}. First we recall
the construction of Zhelobenko operators for
the Lie algebra $\gl_m$ from \cite{KO}\ts; see also \cite{KN1,KNV}.
Let $\A$ be any associative algebra containing $\U(\gl_m)$
as a subalgebra. Then $\A$ can be regarded as a $\U(\gl_m)$-bimodule.
Suppose that the following two conditions are satisfied:
\begin{itemize}
\item[(i)] the corresponding adjoint action of the Lie algebra 
$\gl_m$ on $\A$ is locally finite and can be lifted 
to an algebraic action of the group $\GL_m$ by automorphisms 
of the algebra $\A\,$;
\\[-24pt]
\item[(ii)] 
the algebra $\A$ contains a vector subspace $\V$ 
invariant under the action of $\GL_m\,$, 
such that multiplication in $\A$ gives an isomorphism of $\GL_m$-modules
$
\U(\gl_m)\otimes\V\to\A\,.
$
\end{itemize}

\vspace{-4pt}
Consider $\S_m$ as the subgroup
of $\GL_m$ consisting of the permutation matrices. 
Note that under the above conditions the group $\S_m$
acts by automorphisms on the algebra $\A\,$.
In the beginning of Subsection \ref{sec:2.1}
we fixed a triangular decomposition $\gl_m=\n\oplus\h\oplus\n'\,$.
Now let $\Uh$ and $\Ab$ be the rings of fractions of 
$\U(\h)$ and of $\A$ relative to the set of denominators,
multiplicatively generated by all the elements
$$
E_{aa}-E_{bb}+z\in\U(\h)
\quad\text{where}\quad
a<b
\quad\text{and}\quad
z\in\ZZ\,.
$$
For every index $a=1\sco m-1$ consider a
standard $\mathfrak{sl}_2$-triple in $\gl_m\,$:
\begin{equation}
\label{sltwo}
E_a=E_{a,a+1}\,,\quad
F_a=E_{a+1,a}
\quad\text{and}\quad
H_a=E_{aa}-E_{a+1,a+1}\,.
\end{equation}
Then one can define a linear map $\xic_a:\A\to\Ab/\n\Ab$
by setting for each $x\in\A$
$$
\xic_a(x)=
\sigma_{a}(x)+\sum_{r=1}^\infty\,
(r\ts!\,H_a(H_a-1)\ldots(H_a-r+1))^{-1}
E_a^{\,r}\ad_{F_a}^{\,r}(\sigma_{a}(x))
\ \operatorname{mod}\ \n\Ab
$$
where $\sigma_a\in\S_m$ 
is the transposition of $a$ and $a+1$, 
regarded as an element of $\Aut(\A)$.

The so defined map $\xic_a$ can be canonically 
extended to the \emph{Zhelobenko operators}
$$ 
\xic_a\colon\Ab/\n\Ab\to\Ab/\n\Ab
\qquad\text{and}\qquad
\xic_a\colon\Ab/(\n\Ab+\Ab\n'\ts)\to
\Ab/(\n\Ab+\Ab\n'\ts)
$$
which we denote by the same symbol 
$\xic_a$ with a slight abuse of notation.
By \cite{Z} the operators $\xic_1\sco\xic_{m-1}$
on $\Ab/\n\Ab$ and on $\Ab/(\n\Ab+\Ab\n'\ts)$
satisfy the braid group relations
\begin{align}
\label{braid3}
\xic_a\,\xic_{a+1}\,\xic_a 
&\,=\, 
\xic_{a+1}\,\xic_a\,\xic_{a+1} 
&&\hspace{-85pt}\text{for}\qquad a<m-1\,, 
\\
\label{braid2}
\xic_a\,\xic_b 
&\,=\, 
\xic_b\,\xic_a 
&&\hspace{-85pt}\text{for}\qquad |\,a-b\,|>1\,.
\end{align}
Moreover, for any $H\in\h$ and $x\in\Ab$ we have
$$
\xic_a\ts(H\ts x)=(\si_a\circ H)\,\xic_a(x)
\qquad\text{and}\qquad
\xic_a\ts(x\ts H)=\xic_a(x)\,(\si_a\circ H)
$$
where
\beq
\label{shiftuh}
\sigma_a\circ E_{bb}=E_{\sigma_a(b),\sigma_a(b)}
-\delta_{ab}+\delta_{a+1,b}
\eeq
defines the \emph{shifted} action of the Weyl group $\S_m$ on $\U(\h)\,$.

Another family of Zhelobenko operators on 
the double coset space $\Ab/(\n\Ab+\Ab\n'\ts)$ was also studied in \cite{KO}.
For any $a=1\sco m-1$ we can define a linear map 
$\zec_a:\A\to\Ab\ts/\Ab\n'$~by
$$
\zec_a(x)=\si_a(x)+\sum_{r=1}^\infty\,
(-1)^r\ad_{E_a}^{\,r}(\si_{a}(x))\,F_a^{\,r}\,
(r\ts!\,H_a\,(H_a-1)\ldots (H_a-r+1))^{-1}
\ \operatorname{mod}\ \Ab\n'\,.
$$
It can be then extended to the operators
$$
\zec_a\colon \Ab\ts/\Ab\n' \to \Ab\ts/\Ab\n'
\qquad\text{and}\qquad
\zec_a\colon\Ab/(\n\Ab+\Ab\n'\ts)\to\Ab/(\n\Ab+\Ab\n'\ts)
$$
which we denote by the same symbol $\zec_a$ 
by an abuse of notation.
The operators $\zec_1\sco\zec_{m-1}$
satisfy the same braid group relations as 
$\xic_1\sco\xic_{m-1}\,$. 
Moreover, for any $H\in\h$ and $x\in\Ab$ 
$$
\zec_a\ts(H\ts x)=(\si_a\circ H)\,\zec_a(x)
\qquad\text{and}\qquad
\zec_a\ts(x\ts H)=\zec_a(x)\,(\si_a\circ H)\,.
$$

For any $a=1\sco m-1$ we will now give a more explicit description 
of the operators $\xic_a$ and $\zec_a$ on 
$\Ab/(\n\Ab+\Ab\n'\ts)$. 
Let $\g_a\subset\gl_m$ be Lie subalgebra 
spanned by the three elements \eqref{sltwo}.
Choose $j\in\{\ts0,\frac12\,,1,\ldots\ts\}$ and take any $x\in\V$
from an irreducible $\g_a$-submodule of
$\V$ of dimension $2j+1\,$. Here we use the restriction
of the adjoint action of $\gl_m$ to the subspace $\V\subset\A\,$. 
Suppose that $x$ is of the weight $2\ts h$ relative to the action $H_a\,$:
$$
[\,H_a,x\,]=2\ts h\ts x
\quad\text{where}\quad
h\in\{-j,-j+1,\ldots, j-1,j\ts\}\,.
$$
Then by \cite[Section 1.4]{KN2} the double coset
$\xic_a(x)\in\Ab\ts/(\n\Ab+\Ab\n')$ is that of the element~of~$\Ab$
\begin{align*}
&\prod_{i=1}^{j-h}(H_a-i+1)^{-1}
\,\cdot\,\sigma_a(x)\,\cdot\,
\prod_{i=1}^{j-h}(H_a+i+1)\,=
\\
&\prod_{i=0}^{j+h}(H_a+i+1)
\,\cdot\,\sigma_a(x)\,\cdot\, 
\prod_{i=0}^{j+h}(H_a-i+1)^{-1}
\end{align*}
while the double coset
$\zec_a(x)\in\Ab\ts/(\n\Ab+\Ab\n')$ is that of the element~of~$\Ab$
\begin{align*}
&\prod_{i=0}^{j-h}(H_a-i+1)^{-1}
\,\cdot\,\sigma_a(x)\,\cdot\,
\prod_{i=0}^{j-h}(H_a+i+1)\,=
\\
\label{qprod}
&\prod_{i=1}^{j+h}(H_a+i+1)
\,\cdot\,\sigma_a(x)\,\cdot\, 
\prod_{i=1}^{j+h}(H_a-i+1)^{-1}\,.
\end{align*}
As a corollary we get the relations in $\Ab\ts/(\n\Ab+\Ab\n')$
\begin{equation}
\label{qxiprod}
\xic_a(x)\,(H_a+1)=
(H_a+1)\,\zec_a(x)\,,
\end{equation}
\begin{equation}
\label{qxiprod2}
(\,\xic_a\,\zec_a\ts)(x)=(\,\zec_a\,\xic_a\ts)(x)=x 
\ \operatorname{mod}\ (\n\Ab+\Ab\n')\,.
\end{equation}


\subsection{}
\label{sec:pf-i-2}

Let $\D_{mn}$ be the ring of left 
differential operators on the Grassmann algebra~$\G_{mn}\,$,
see Subsection~\ref{sec:2.1} for further notation.
Choose the algebra $\A$ from the previous subsection as
$$
\A=\U(\gl_m)\otimes \D_{mn}\,.
$$
The assignment $E_{ab}\mapsto D_{ab}$ defines a homomorphism
$\phi\colon\U(\gl_m)\to\D_{mn}\,$.
The algebra $\A$ contains a diagonally embedded subalgebra $\U(\gl_m)$ 
generated by elements $X\otimes 1+1\otimes\phi(X)$ 
where $X\in\gl_m\,$. The subspace $1\otimes \D_{mn}\subs\A$ 
is invariant under the adjoint action of 
$\gl_m$ and can be chosen as the subspace
$\V$ from the previous subsection. 
In particular, the symmetric group $\S_m\subset\GL_m(\CC)$ 
acts on $\D_{mn}$ by permuting the first indices of the anticommuting
variables $x_{ai}$ and the corresponding left derivations 
$\d_{ai}\,$. For any $\aw\in\h^*$ let
$\J_\aw$ and $\Jb_\aw$ be the right ideals of respectively $\A$ and $\Ab$
generated by all elements $E_{aa}-\aw_a$ where
$a=1 \sco m\,$. Let $\I_{\ts\mu}$ and $\Ib_{\ts\mu}$ 
be the left ideals of respectively $\A$ and $\Ab$ 
generated by all elements
$\d_{ai}\,,E_{aa}-\mu_a$ where $a=1\sco m$ and $i=1 \sco n\,$.

Take any pair of weights $\la,\mu\in\h^*$ such that
all coordinates $\nu_a$ of the difference $\nu=\lambda-\mu$ 
are non-negative integers not exceeding $n\,$. 
Consider the quotient vector space
$$
\A^\la_\mu=\A\;/(\,\n\A+\A\n'+\J_\la+\I_{\ts\mu}\,)\,.
$$
One can define an isomorphism of vector spaces 
$$
\iso_\nu:\G_{mn}^{\ts\nu}\to\A^\la_\mu
$$
by mapping any $x\in\G_{mn}^{\ts\nu}$ to the coset of $1\otimes x\in\A\,$.
The vector space $\A^\la_\mu$ comes equipped with a natural structure 
of an $\YY$-module, see \cite[Section 3]{KN1}. Further, let 
$\rho\in\h^*$ be the weight with coordinates $\rho_a=1-a\,$.
Then by \cite[Corollary 2.4]{KN1} the composite map
\beq
\label{isophia}
\iso_\nu\,\al_\nu:\,
\Phi^{\lambda-\rho}_{\mu-\rho}
\to 
\A_{\mu}^{\lambda}
\eeq
is an isomorphism of $\YY$-modules, see also the 
beginning of Subsection~\ref{sec:22} above.

In what follows we will also consider the 
the shifted action of the Weyl group $\S_m$ on $\h^*$:
using the weight $\rho$ determined above, 
for any $\si\in\S_m$ we have
$$
\si\circ\la=\si(\la+\rho)-\rho\,.
$$
Note that by regarding the elements of $\U(\h)$ as polynomial functions
on $\h^*$ we then recover the action of $\S_m$ on $\U(\h)$
defined by \eqref{shiftuh}.
 
If $\aw$ is \emph{generic},
that is if $\aw_a-\aw_b\not\in\ZZ$ for all $a\ne b\,$,
then the weight $\mu$ is generic as well.
Then the quotient vector space
$\A^\la_\mu$ can be identified with another quotient vector space
$$
\Ab^\la_\mu = \Ab\;/(\,\n\Ab+\Ab\n'+\Jb_\la+\Ib_{\ts\mu}\,)\,.
$$
Namely, for each $x\in\G_{mn}^{\ts\nu}$ one can then identify the
cosets of $1\otimes x$ in $\A^\la_\mu$ and $\Ab^\la_\mu\,$.  

\begin{prop}
\label{42}
Suppose that the weight $\la\in\h^*$ is generic. 
Then for any $a=1\sco m-1\,$:

\smallskip

(i) the Zhelobenko operator $\xic_a$ on $\Ab\;/(\,\n\Ab+\Ab\n'\ts)$
induces a linear map
\beq
\label{ua}
\xiclm:
\A^\lambda_\mu
\to
\A^{\sigma_a\circ \lambda}_{\sigma_a\circ\mu}\,\textit{;}
\eeq

(ii) the latter map is $\YY$-intertwining\,;

(iii) we have
$$
\iso_{\si_a(\nu)}^{-1}\,\xiclm\,\iso_{\nu}=
\sigma_a\cdot\sum_{r=0}^\infty\,
\frac{(-1)^r\E_{a+1,a}^r\E_{a,a+1}^r}
{r!\,(\lambda_a-\lambda_{a+1}+2)_{\ts r}}\ .
$$
\end{prop}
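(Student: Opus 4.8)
The plan is to prove the three assertions in the order (i), (iii), (ii), with (iii) as the computational core from which the other two follow with little extra work. The hypothesis that $\la$ is generic is used at the outset: by the remark in Subsection~\ref{sec:pf-i-2} it forces $\mu$ generic as well, so throughout I replace $\A^\lambda_\mu$ by $\Ab^\lambda_\mu=\Ab\,/(\,\n\Ab+\Ab\n'+\Jb_\la+\Ib_{\ts\mu}\,)$, which is a further quotient of the double coset space $\Ab/(\n\Ab+\Ab\n'\ts)$ on which $\xic_a$ already acts; genericity also keeps the Pochhammer factors $(\la_a-\la_{a+1}+2)_{\ts r}$ appearing in (iii) nonzero.

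For (i), I must check that $\xic_a$ sends the image of $\Jb_\la+\Ib_{\ts\mu}$ in $\Ab/(\n\Ab+\Ab\n'\ts)$ into the image of $\Jb_{\sigma_a\circ\la}+\Ib_{\sigma_a\circ\mu}$. For the generators $E_{bb}-\la_b$ of the right ideal $\Jb_\la$ and $E_{bb}-\mu_b$ of the left ideal $\Ib_{\ts\mu}$ this is immediate from the covariance relations $\xic_a(Hx)=(\sigma_a\circ H)\,\xic_a(x)$ and $\xic_a(xH)=\xic_a(x)(\sigma_a\circ H)$ of Subsection~\ref{sec:pf-i-1}: by \eqref{shiftuh} the element $\sigma_a\circ(E_{bb}-\la_b)$ equals $E_{\sigma_a(b),\sigma_a(b)}-(\sigma_a\circ\la)_{\sigma_a(b)}$, again a generator of $\Jb_{\sigma_a\circ\la}$, and likewise on the $\mu$-side. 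The remaining generators $\partial_{ai}$ of $\Ib_{\ts\mu}$ are $\mu$-independent and, together with $\partial_{a+1,i}$, span a two-dimensional $\g_a$-submodule of $\V=1\otimes\D_{mn}$ on which $H_a$ has eigenvalues $\pm1$; the product formula of Subsection~\ref{sec:pf-i-1} with $j=\tfrac12$ gives $\xic_a(\partial_{ai})$ equal to the coset of $H_a^{-1}(H_a+1)\,\partial_{a+1,i}\in\Uh\,\partial_{a+1,i}$, while $\xic_a(\partial_{a+1,i})$ and $\xic_a(\partial_{bi})$ for $b\neq a,a+1$ are the cosets of $\partial_{ai}$ and $\partial_{bi}$ — all lying in $\Ib_{\sigma_a\circ\mu}$. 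That $\xic_a$ then respects the \emph{whole} left ideal $\Ib_{\ts\mu}$, i.e.\ descends through the quotient by these ideals, is exactly the fact used in \cite[Section~3]{KN1} to transport the $\YY$-module structure onto the spaces $\A^\lambda_\mu$; I would either cite it or re-derive it from the $\Uh$-bimodule structure of $\Ab/(\n\Ab+\Ab\n'\ts)$ and the splitting $\A\cong\U(\gl_m)\otimes\V$ of Subsection~\ref{sec:pf-i-1}. This produces the operator $\xiclm$ of \eqref{ua}.

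For (iii), I fix $x_0\in\G_{mn}^{\ts\nu}$ and feed the multiplication operator $1\otimes x_0\in\V$ into the series defining $\xic_a$. First, $\ad_{F_a}$ sees $1\otimes\D_{mn}$ only through the $\D_{mn}$-component $\E_{a+1,a}$ of the diagonally embedded $F_a$, so $\ad_{F_a}^{\,r}(1\otimes\sigma_a(x_0))=1\otimes\E_{a+1,a}^{\,r}(\sigma_a(x_0))$. Second, a direct computation in $\A$ shows that for any $z\in\G_{mn}$ one has $E_a\,(1\otimes z)\equiv 1\otimes\E_{a,a+1}(z)$ modulo $\Ab\n'$ — the $\U(\gl_m)$-contribution of the diagonal $E_a$ cancels, since the diagonal copy of $E_{a,a+1}$ lies in $\n'$ and hence in $\Ab\n'$. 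Iterating, $E_a^{\,r}\,\ad_{F_a}^{\,r}(1\otimes\sigma_a(x_0))\equiv 1\otimes\E_{a,a+1}^{\,r}\E_{a+1,a}^{\,r}(\sigma_a(x_0))$ modulo $\n\Ab+\Ab\n'$, and since $\sigma_a\E_{bc}\sigma_a^{-1}=\E_{\sigma_a(b)\sigma_a(c)}$ this equals $1\otimes\sigma_a\bigl(\E_{a+1,a}^{\,r}\E_{a,a+1}^{\,r}(x_0)\bigr)$. Finally, the left coefficient $(r!\,H_a(H_a-1)\cdots(H_a-r+1))^{-1}$ becomes a scalar on passing to $\A^{\sigma_a\circ\la}_{\sigma_a\circ\mu}$: modulo $\Jb_{\sigma_a\circ\la}$ the generator $E_{bb}$ acts from the left by $(\sigma_a\circ\la)_b$, and by \eqref{shiftuh} this turns $H_a$ into $(\sigma_a\circ\la)_a-(\sigma_a\circ\la)_{a+1}=-(\la_a-\la_{a+1}+2)$, whence the coefficient becomes $(-1)^r\bigl(r!\,(\la_a-\la_{a+1}+2)_{\ts r}\bigr)^{-1}$. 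Summing over $r$, with $\sigma_a$ emerging on the left and recalling that $\iso_\nu$ sends $x_0$ to the coset of $1\otimes x_0$, yields precisely the formula in (iii).

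For (ii), the $\YY$-module structure on $\A^\lambda_\mu$ constructed in \cite[Section~3]{KN1} is realized by operators commuting with the adjoint action of $\gl_m$ and with the subgroup $\S_m\subset\GL_m$ — in the Howe-duality picture behind \cite{KN1} it lives on the $\gl_n$-side — whereas $\xic_a$ is assembled solely from $\sigma_a$ and from $\ad_{E_a},\ad_{F_a},H_a$, all on the $\gl_m$-side. Hence $\xic_a$ commutes with the $\YY$-action on $\Ab/(\n\Ab+\Ab\n'\ts)$, and this commutation descends to $\A^\lambda_\mu$; equivalently, the intertwining property is already part of the construction of \cite{KN1}. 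I expect step (ii) to be the main obstacle: unlike (i) and (iii) it is not formal from the Zhelobenko calculus, but hinges on the precise form of the $\YY$-action of \cite[Section~3]{KN1} and on its invariance under $\gl_m$ and $\S_m$. The only other point demanding care is the claim within (i) that $\xic_a$ respects the entire left ideal generated by the $\partial_{ai}$ (not merely the generators), which is likewise where input from \cite{KN1} is essential.
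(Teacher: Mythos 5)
Your route differs from the paper's. The paper proves Proposition~\ref{42} essentially by citation: after the generic identification of $\A^\la_\mu$ with $\Ab^\la_\mu$, parts (i) and (ii) are taken from \cite[Section~3]{KN1}, and part (iii) is quoted from \cite[Proposition~3.5, Corollary~3.6, Equation~3.12]{KN2}. You instead re-derive (iii) and most of (i) directly, and your computations check out: $\ad_{F_a}$ acts on $1\otimes\D_{mn}$ only through $\E_{a+1,a}$ because $\E_{bc}$ is an even derivation of $\G_{mn}$; left multiplication by the diagonal $E_a$ reduces to $\E_{a,a+1}$ modulo $\Ab\ts\n'$; and on the target, modulo the right ideal $\Jb_{\sigma_a\circ\la}\ts$, the element $H_a$ becomes $(\sigma_a\circ\la)_a-(\sigma_a\circ\la)_{a+1}=-(\la_a-\la_{a+1}+2)$, which yields exactly the series in (iii). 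This is a legitimate short alternative to invoking \cite{KN2}. Your generator-level analysis in (i) is also right (the covariance relations handle the Cartan generators of both ideals, and the $j=\tfrac12$ product formula gives $\xic_a(\d_{ai})$ as a coset in $\Uh\,\d_{a+1,i}$), and you correctly identify that passing from the generators to the whole left ideal $\Ib_{\ts\mu}$ needs the multiplicativity-type properties of the Zhelobenko operators on the double coset algebra, i.e.\ input from \cite{KO} or \cite{KN1}; the paper defers this wholesale to \cite{KN1} anyway.

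The step that would fail as written is your justification of (ii). The $\YY$-action of \cite[Section~3]{KN1} on $\A^\la_\mu$ is not realized by operators commuting with the adjoint action of all of $\gl_m\ts$: by Howe duality only the image of the subalgebra $\U(\gl_n)\subset\YY$ centralizes $\gl_m$ in $\D_{mn}\ts$, while the images of the higher Yangian generators in the localized double coset algebra involve denominators built from the Cartan elements $E_{aa}$ and are merely of $\h$-weight zero, not $\gl_m$-invariant (nor termwise $\S_m$-invariant). Note also that $\xic_a$ changes the parameters, mapping $\A^\la_\mu$ to $\A^{\sigma_a\circ\la}_{\sigma_a\circ\mu}\ts$, so ``the two sides commute'' is not even a well-posed statement on a single module; the real content of \cite[Section~3]{KN1} is the verification that $\xic_a$ carries the Yangian images over the weight $\la$ to the corresponding images over $\sigma_a\circ\la\ts$. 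Your fallback of simply citing \cite{KN1} for (ii) is fine --- it is precisely the paper's own proof of (i) and (ii) --- but the heuristic should be dropped rather than presented as equivalent to that citation.
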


\begin{proof}
Since the weight $\la$ is generic, 
we can identify the quotient vector spaces
$\A^\lambda_\mu$ and $\A^{\sigma_a\circ \lambda}_{\sigma_a\circ\mu}$ 
respectively with
$\Ab^\lambda_\mu$ and $\Ab^{\sigma_a\circ \lambda}_{\sigma_a\circ\mu}$
as above. Using this identification,
the parts (i) and (ii) of the proposition have been proved in
\cite[Section 3]{KN1}. 
Further, the part (iii) is a particular~case of 
\cite[Proposition 3.5 and Corollary 3.6]{KN2}.
Namely see \cite[Equation 3.12]{KN2}.
\end{proof}

Let us now fix $\nu=\lambda-\mu$ and consider the map $\xiclm$
as a function of the parameter $\la\in\h^*$.
It immediately follows from the part (iii) of Proposition \ref{42}
that this function
admits an analytical continuation from the generic $\la$ to all weights
$\lambda$ such that 
$$
\lambda_a-\lambda_{a+1}+1\not=-1,-2,\ldots\,.
$$
Using the isomorphisms \eqref{isophia} and then changing notation
from $\la+\rho$ and $\mu+\rho$ to $\la$ and $\mu$ respectively, we now
obtain an intertwining operator of standard polynomial $\YY$-modules
$$
I_a:\,
\Phi_\mu^\lambda
\to
\Phi_{\sigma_a(\mu)}^{\sigma_a(\lambda)}
$$ 
for any pair of weights $\la,\mu\in\h^*$ such that
all coordinates $\nu_a$ of the difference $\nu=\lambda-\mu$ 
are non-negative integers not exceeding $n\,$ while
\begin{equation}
\label{ladom}
\lambda_a-\lambda_{a+1}\not=-1,-2,\ldots\,.
\end{equation}
Moreover, by the part (iii) of Proposition \ref{42} 
we have an explicit formula for this operator:
\begin{equation}
\label{4.3.2}
I_a\,=\,\al_{\sigma_a(\nu)}^{\,-1}
\ts\biggl(
\sigma_a\cdot\sum_{r=0}^\infty\,
\frac{(-1)^r\E_{a+1,a}^r\E_{a,a+1}^r}
{r!\,(\lambda_a-\lambda_{a+1}+1)_{\ts r}}
\ts\biggr)\,
\al_\nu\,.
\end{equation}


\subsection{}

Proposition \ref{42} and its implications as described above
have their counterparts for the Zhelobenko operator 
$\zec_a$ instead of $\xic_a\,$. We give them here.
Take again any weights $\la,\mu\in\h^*$ such that
all coordinates $\nu_a$ of 
$\nu=\lambda-\mu$ 
are non-negative integers not exceeding~$n\,$.

\begin{prop}
\label{43}
Suppose that the weight $\la\in\h^*$ is generic. 
Then for any $a=1\sco m-1\,$:

\smallskip

(i) the Zhelobenko operator $\zec_a$ on $\Ab\;/(\,\n\Ab+\Ab\n'\ts)$
induces a linear map
$$
\zeclm:
\A^\lambda_\mu
\to
\A^{\sigma_a\circ \lambda}_{\sigma_a\circ\mu}\,\textit{;}
$$

(ii) the latter map is $\YY$-intertwining\,;

(iii) we have
$$
\iso_{\si_a(\nu)}^{-1}\,\zeclm\,\iso_{\nu}=
\sigma_a\cdot\sum_{r=0}^\infty\,
\frac{(-1)^r\E_{a,a+1}^r\E_{a+1,a}^r}
{r!\,(\mu_a-\mu_{a+1}+2)_{\ts r}}\ .
$$
\end{prop}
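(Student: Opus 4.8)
The plan is to mirror verbatim the proof of Proposition \ref{42}, replacing $\xic_a$ by $\zec_a$ throughout, and then to reconcile the resulting $\sl_2$-expression with the stated formula. For parts (i) and (ii), I would invoke the results of \cite[Section 3]{KN1} exactly as before: after identifying $\A^\la_\mu$ with $\Ab^\la_\mu$ and $\A^{\si_a\circ\la}_{\si_a\circ\mu}$ with $\Ab^{\si_a\circ\la}_{\si_a\circ\mu}$ — legitimate since $\la$, hence $\mu$, is generic — the operator $\zec_a$ on $\Ab/(\n\Ab+\Ab\n')$ maps the right ideal generated by $E_{aa}-\la_a$ to that generated by $E_{aa}-(\si_a\circ\la)_a$ because of the shifted-equivariance relation $\zec_a(x\,H)=\zec_a(x)\,(\si_a\circ H)$, and similarly it respects the left ideals $\Ib_\mu\mapsto\Ib_{\si_a\circ\mu}$ via $\zec_a(H\,x)=(\si_a\circ H)\,\zec_a(x)$; this gives the induced map $\zeclm$ of part (i). The $\YY$-intertwining property in (ii) follows because the action of $\YY$ on these quotient spaces commutes with the Zhelobenko operators, again as established in \cite[Section 3]{KN1} (the same argument that works for $\xic_a$ applies to $\zec_a$ since both are built from the same braid-group action on the double coset space).

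For part (iii), I would start from the explicit description of $\zec_a(x)$ on $\Ab/(\n\Ab+\Ab\n')$ given in Subsection~\ref{sec:pf-i-1}: for $x\in\V=1\otimes\D_{mn}$ lying in an irreducible $\g_a$-submodule of dimension $2j+1$ and of weight $2h$ under $H_a$, the coset $\zec_a(x)$ is represented by
$$
\prod_{i=1}^{j+h}(H_a+i+1)\,\cdot\,\si_a(x)\,\cdot\,\prod_{i=1}^{j+h}(H_a-i+1)^{-1}.
$$
Applying this to the diagonally embedded copy of $\U(\gl_m)$ and using that on $\A^\la_\mu$ the element $H_a$ acts (from the right, through $\J_\la$) by the scalar $\la_a-\la_{a+1}$ while the left action through $\I_\mu$ contributes $\mu_a-\mu_{a+1}$, the ratio of Pochhammer-type products collapses — exactly as in the derivation of \cite[Equation 3.12]{KN2} for $\xic_a$ — into the series
$$
\si_a\cdot\sum_{r=0}^\infty\frac{(-1)^r\E_{a,a+1}^r\E_{a+1,a}^r}{r!\,(\mu_a-\mu_{a+1}+2)_r}\,.
$$
The key point distinguishing this from Proposition \ref{42}(iii) is the \emph{order} of the monomials: for $\zec_a$ the powers of $\E_a=\E_{a,a+1}$ stand to the left of the powers of $\E_{a+1,a}$, and the denominator involves $\mu_a-\mu_{a+1}$ rather than $\la_a-\la_{a+1}$; this is a direct consequence of $\zec_a$ using $\ad_{E_a}^r(\cdot)\,F_a^r$ (with the $H_a$-normalization on the right) in place of the $E_a^r\,\ad_{F_a}^r$ pattern defining $\xic_a$, together with the fact that $\zec_a$ lives on $\Ab/\Ab\n'$ where it is the left ideal $\I_\mu$ (carrying the weight $\mu$) that survives. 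Since $\E_{ba}$ lowers the $H_a$-weight, the condition $\mu_a-\mu_{a+1}\ne -1,-2,\ldots$ — equivalently $(\mu_a-\mu_{a+1}+2)_r\ne0$ for all $r$, which holds automatically for generic $\la$ — ensures convergence of the series on the finite-dimensional module.

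I expect the main obstacle to be purely bookkeeping: carefully tracking which of the two weights ($\la$ through $\J_\la$, or $\mu$ through $\I_\mu$) enters the Pochhammer denominator, and in which order the raising/lowering operators appear, when passing from the abstract formula for $\zec_a$ on $\Ab/(\n\Ab+\Ab\n')$ to its incarnation on $\A^\la_\mu$ under the isomorphism $\iso_\nu\,\al_\nu$ of \eqref{isophia}. There is no genuinely new analytic or algebraic content beyond Proposition \ref{42}; the symmetry $\xic_a\leftrightarrow\zec_a$, $\la\leftrightarrow\mu$, and the transpose-like reversal of operator order is exactly the content of the relation \eqref{qxiprod} between the two families of Zhelobenko operators, so once that dictionary is in hand all three parts follow by the same citations to \cite{KN1,KN2,KO} used for Proposition \ref{42}.
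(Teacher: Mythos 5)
Your plan for parts (i) and (ii) has a genuine gap. The shifted-equivariance relations $\zec_a(H\ts x)=(\si_a\circ H)\,\zec_a(x)$ and $\zec_a(x\ts H)=\zec_a(x)\,(\si_a\circ H)$ only take care of the Cartan generators $E_{aa}-\la_a$ and $E_{aa}-\mu_a$ of the ideals $\Jb_\la$ and $\Ib_{\ts\mu}$; the crux of (i) is that the left ideal $\Ib_{\ts\mu}$ is also generated by the derivations $\partial_{ai}$, and one must show that $\zec_a$ carries their cosets into $\Ib_{\ts\si_a\circ\mu}$ modulo $\n\Ab+\Ab\n'+\Jb_{\si_a\circ\la}$. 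This does not follow from citing \cite[Section 3]{KN1} ``exactly as before'': those results are proved for $\xic_a$, not for $\zec_a$, and the two operators are genuinely different (on the quotients they are only proportional, cf.\ \eqref{qxiprod}). Moreover $\zec_a$ is built on $\Ab/\Ab\n'$ with the $F_a$-powers and the $H_a$-coefficients standing on the right, so it is adapted to right ideals generated by the $x_{ai}$, not to left ideals generated by the $\partial_{ai}$; the assertion that ``the same argument applies since both come from the same braid-group action'' is precisely the missing step. Your justification of (ii) cites \cite{KN1} in the same way and inherits the same gap.

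The paper closes this gap by a different device: the involutive anti-automorphism of $\A=\U(\gl_m)\otimes\D_{mn}$ extending the transposition $E_{ab}\mapsto E_{ba}$ and swapping $x_{ai}\leftrightarrow\partial_{ai}$. It conjugates $\zec_a$ into $\xic_a$ and carries $\A^\la_\mu$ onto the quotient $\Bb^{\ts\la}_{\ts\mu}$ by the transposed ideals ($\Jpb_\la$ on the left, $\Ipb_{\ts\mu}$ on the right, the latter generated by the $x_{ai}$), so that (i) reduces to the statement that $\xic_a$ acts on these $\Bb$-quotients, proved as in Proposition \ref{42}(i). Part (ii) is then deduced from Proposition \ref{42}(ii) via the proportionality \eqref{4.3.66}, $(\mu_a-\mu_{a+1}+1)\,U_a=(\la_a-\la_{a+1}+1)\,V_a$, which follows from \eqref{qxiprod} and genericity --- this is exactly your closing ``dictionary'' remark, but you never deploy it where it is actually needed. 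Part (iii) is likewise transported from Proposition \ref{42}(iii) by the anti-automorphism rather than recomputed; your direct computation could be made to work, but it presupposes (i), and your bookkeeping is reversed: since $\J_\la$ is a right ideal, it is left multiplication by $H_a$ that becomes the scalar $\la_a-\la_{a+1}$ on $\A^\la_\mu$, while right multiplication becomes $\mu_a-\mu_{a+1}$ through the left ideal $\I_{\ts\mu}$.
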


\begin{proof}
Firstly we make a general observation.
In the setting of Subsection \ref{sec:pf-i-1}
suppose that the algebra $\A$ admits an involutive anti-automorphism 
such that its restriction to
$\gl_m\subset\A$ coincides with the matrix transposition
$E_{ab}\mapsto E_{ba}$ for $a,b=1\sco m\ts$. Then the Zhelobenko operators 
$\xic_a$ and $\zec_a$ on $\Ab/(\n\Ab+\Ab\n'\ts)$ 
are conjugate to each other by the 
involutive linear operator on $\Ab/(\n\Ab+\Ab\n'\ts)$
induced by this anti-automorphism. 

Now choose $\A=\U(\gl_m)\otimes \D_{mn}$ as in Subsection \ref{sec:pf-i-2}.
Then the matrix transposition on $\gl_m$ extends
to an involutive anti-automorphism of $\A$ which maps
$x_{ai}\mapsto\partial_{ai}$ for $a=1\sco m$ and $i=1\sco n\ts$.
Using our observation with this choice of 
the involutive anti-automorphism of $\A\,$,
the part (i) follows from another property of $\xic_a$ stated below.
Let $\Jpb_\lambda$ be the left ideal of $\Ab$ generated by all elements
$E_{aa}-\aw_a$ where $a=1\sco m\,$. Let
$\Ipb_{\ts\mu}$ be the right ideal of 
$\Ab$ generated by all elements $x_{ai}\,,E_{aa}-\mu_a$
where $a=1\sco m$ and $i=1\sco n\,$. Consider the 
quotient vector space
$$
\Bb^{\ts\la}_{\ts\mu}=\Ab\;/(\,\n\Ab+\Ab\n'+\Jpb_\la+\Ipb_{\ts\mu}\,)\,.
$$
Then the Zhelobenko operator $\xic_a$ on $\Ab\;/(\,\n\Ab+\Ab\n'\ts)$
induces a linear map
$
\Bb^{\ts\lambda}_{\ts\mu}
\to
\Bb^{\,\sigma_a\circ\lambda}_{\,\sigma_a\circ\mu}\,.
$
This property of $\xic_a$
can be proved in the same way as the part (i) of Proposition~\ref{42}. 

Further, since the weight $\la$ is generic we can use Proposition \ref{42}
and define the operator \eqref{ua}.
It then readily follows from the relation \eqref{qxiprod} that
\beq
\label{4.3.66}
(\mu_a-\mu_{a+1}+1)\,U_a = (\la_a-\la_{a+1}+1)\,V_a\,.
\eeq
In particular, the part (ii) of Proposition \ref{43} follows from
the part (ii) of Proposition \ref{42}. 
The part (iii) of Proposition \ref{43}
can be derived from the respective part of Proposition~\ref{42}
by using the above chosen involutive anti-automorphism of the algebra $\A\,$.
\end{proof}

Let us again fix $\nu=\lambda-\mu$ and consider the map $\zeclm$
as a function of the parameter $\la\,$.
It immediately follows from the part (iii) of Proposition \ref{43}
that this function
admits an analytical continuation from the generic $\la$ to all weights
$\lambda$ such that 
$$
\mu_a-\mu_{a+1}+1\not=-1,-2,\ldots\,.
$$
Using the isomorphisms \eqref{isophia} and then changing notation
from $\la+\rho$ and $\mu+\rho$ to $\la$~and~$\mu$ respectively, the
$\zeclm$ yields an intertwining operator of standard polynomial 
$\YY$-modules
$$
J_a:\,
\Phi_\mu^\lambda
\to
\Phi_{\sigma_a(\mu)}^{\sigma_a(\lambda)}
$$ 
for any pair of weights $\la,\mu\in\h^*$ such that
all coordinates $\nu_a$ of the difference $\nu=\lambda-\mu$ 
are non-negative integers not exceeding $n\,$ while
\beq
\label{mudom}
\mu_a-\mu_{a+1}\not=-1,-2,\ldots\,.
\eeq
Moreover, by the part (iii) of Proposition \ref{43} 
we have an explicit formula for this map:
$$
J_a\,=\,\al_{\sigma_a(\nu)}^{\,-1}
\ts\biggl(
\sigma_a\cdot\sum_{r=0}^\infty\,
\frac{(-1)^r\E_{a,a+1}^r\E_{a+1,a}^r}
{r!\,(\mu_a-\mu_{a+1}+1)_{\ts r}}
\ts\biggr)\,
\al_\nu\,.
$$

Let us compare $J_a$ with the intertwining operator $I_a$
defined from the previous subsection. Suppose that the weights
$\la$ and $\mu$ satisfy the conditions \eqref{ladom} and \eqref{mudom}
respectively, so that both operators $I_a$ and $J_a$ are defined.
It then immediately follows from \eqref{4.3.66} that
\beq
\label{4.3.6}
(\mu_a-\mu_{a+1})\,I_a = (\la_a-\la_{a+1})\,J_a\,.
\eeq
Here we have taken into account the change from 
$\la+\rho$ and $\mu+\rho$ to $\la$~and~$\mu\,$.
The relation \eqref{4.3.6} shows
that the operator $I_a$ vanishes on the hyperplane $\la_a=\la_{a+1}$ 
in $\h^*$ while the operator
$J_a$ vanishes on the hyperplane $\mu_a=\mu_{a+1}\,$. 
Thus 
the operator $J_a$ can be regarded as a renormalization of the operator
$I_a$ that is also regular on the hyperplane $\la_a=\la_{a+1}$.

Now suppose that the weight
$\la$ still satisfies the condition \eqref{ladom} while
the weight $\mu$ obeys  
\beq
\label{mudominv}
\mu_a-\mu_{a+1}\not=1,2,\ldots
\eeq
instead of \eqref{mudom}. Then by using Proposition \ref{43} 
and the subsequent argument we can define instead of $J_a$ an intertwining
operator $\Phi_{\sigma_a(\mu)}^{\sigma_a(\lambda)}\to\Phi_\mu^\lambda\,$.
Denote it by $J_a^{\,\prime}\,$. The relation \eqref{qxiprod2}~now 
implies the equalities
\beq
\label{ijid}
J_a^{\,\prime}\,I_a=\Id
\quad\text{and}\quad
I_a\,J_a^{\,\prime}=\Id
\eeq
on $\Phi_\mu^\lambda$ and $\Phi_{\sigma_a(\mu)}^{\sigma_a(\lambda)}$
respectively. In particular, under \eqref{mudominv}
the operator $I_a$ is invertible.


\subsection{}
\label{sec:pf-i-3}

In this subsection we will complete the proof of
Proposition~\ref{prop-i}. Let again $\lambda,\mu\in\h^*$ be 
any weights satisfying all conditions of that proposition.
Choose any reduced decomposition $\si_0=\si_{a_1}\ldots\si_{a_\ell}$ 
of the longest element of $\S_m\,$. Here $\ell=m\ts(m-1)/2\,$.
Using Proposition \ref{42} and the subsequent argument we can define 
for each $s=1\sco\ell$ an intertwining operator
\beq
\label{ias}
I_{a_s}:\,
\Phi_{\,\si(\mu)}^{\,\si(\la)}
\to
\Phi_{\,\si_{a_s}\si(\mu)}^{\,\si_{a_s}\si(\la)}
\quad\ \text{where}\ \quad
\si=\sigma_{a_{s+1}}\ldots\sigma_{a_\ell}\,.
\eeq
Let
$
I=I_{a_1}\ldots I_{a_\ell}
$ 
be the composition of these operators.
We get an intertwining operator
$$
I:\, \
\Phi_\mu^\lambda
\,\to\,
\Phi_{\sigma_0(\mu)}^{\sigma_0(\lambda)}
$$
which does not depend on the choice of reduced decomposition of $\si_0$
due to the braid group relations \eqref{braid3},\eqref{braid2}. 
Moreover by using \eqref{4.3.2} repeatedly, we get an explicit formula 
for the latter operator: 
in the notation of Subsections \ref{sec:2.1} and \ref{sec:22}
$$
I\,=\,
\al_{\sigma_0(\nu)}^{\,-1}\, 
\sigma_0\,Z\,\al_{\nu}
$$
where
\beq
\label{xabla}
Z\ =
\prod_{1\le a<b\le m}^{\longrightarrow}\!
X_{ab}^\la
\eeq
and the factors $X_{ab}^\la$ are ordered 
so that for every $s=1\sco\ell$ the $s\,$th factor from the left
has the indices $a=\si^{-1}(a_s)$ and $\,b=\si^{-1}(a_s+1)\,$.
Here we use the permutation $\si$ from~\eqref{ias}.

Now take the vector \eqref{vlm}.
Due to \cite[Proposition 3.7]{KN1} the vector 
$$
I(\vlm)\in\Phi_{\sigma_0(\mu)}^{\sigma_0(\lambda)}
$$ 
is equal to the vector \eqref{vlms} multiplied by $(-1)^N$
in the notation \eqref{N}, and by the product
$$
\prod\limits_{\substack{1\le a<b\le m \\ \!\!\!\nu_a<\nu_b}}\,
\displaystyle
\frac{\,\la_a-\la_b}{\,\mu_a-\mu_b}\ .
$$
Let us now replace every factor $X_{ab}^\la$ with $\nu_a<\nu_b$ 
in \eqref{xabla} by the corresponding factor $Y_{ab}^\mu\,$.
Here $\mu_a-\mu_b>\la_a-\la_b$ so that 
$
\mu_a-\mu_b\neq0,-1,-2,\ldots
$
and the operator $Y_{ab}^\mu$ is defined. 

These replacements change the product \eqref{xabla} to \eqref{z}.
On the other hand, due to 
\eqref{4.3.6} for each such replacement
$$
(\mu_a-\mu_b)\,X_{ab}^\la=(\la_a-\la_b)\,Y_{ab}^\mu\,.
$$
It now follows that the operator $I^\la_\mu$ defined by \eqref{i}
satisfies \eqref{ivlm}. This argument completes
the proof of the parts (i,ii,iii) of Proposition~\ref{prop-i}.
Further, due to \eqref{ivlm} our operator $I^\la_\mu$
coincides with the intertwining operator $I(\mu-\rho)$ 
in the notation of \cite{KNP}. Therefore the last part (iv) of
our Proposition~\ref{prop-i} follows directly~from 
\cite[Theorem 1.1~and~Proposition 2.9]{KNP}. 

\begin{remark}
Due to \eqref{4.3.6} any factor $X_{ab}^\la$ with $\nu_a=\nu_b$ 
equals $Y_{ab}^\mu\,$. Hence the strict inequality in the second 
line of the definition \eqref{z} can be replaced by a non-strict one.
\end{remark}

\begin{remark}
The proof of \cite[Proposition 2.9]{KNP} has been based on 
the equalities \eqref{ijid} which hold under the conditions
\eqref{ladom} and \eqref{mudominv}. In the present paper
we gave an independent proof of these two equalities,
by using the properties of Zhelobenko operators.  
\end{remark}


\subsection{}
\label{sec:pf-ri}
In this subsection we will prove Theorem \ref{prop-ri}.
Suppose the weights $\la$ and $\nu=\la-\mu$ satisfy all conditions
of the theorem. Determine the sequence of signs $\sign$ together with the
weights $\lap$ and $\nup$ as in 
Subsection~\ref{sec:rat}. Then we can define an isomorphism
of $\YY$-modules
\beq
\label{4.5.6}
J^{\ts\la}_\mu:\,\Phi_\mu^\la
\,\to
\Phi_\mu^{\ts\lap}
\otimes
\bigl(\,\underset{a\,:\,\nu_a<0}{\otimes}\,\Delta'_{\ts\mu_a}\,\bigr)
\eeq
as follows. Consider the bijective linear maps
$$
\al_{|\nu|}:\Phi_\mu^\la\to\G_{mn}^{\ts|\nu|}
\ \quad\text{and}\ \quad
\al_{\nup}:\,\Phi_\mu^{\ts\lap}\,\to\,\G_{mn}^{\ts\nup}\,.
$$
Using the notation \eqref{qp} let $S_\sign$ be 
the involutive automorphism of the algebra $\D_{mn}$ such that
$$
S_\sign(x_{ai})=q_{ai}
\quad\text{and}\quad
S_\sign(\d_{ai})=p_{ai}
\quad\text{for}\quad
a=1\sco m 
\quad\text{and}\quad
i=1\sco n\,.
$$
For every $x\in\G_{mn}$ put 
\beq
\label{rep}
R_{\ts\sign}(x)=S_\sign(x)
\prod^{\longrightarrow}_{a\,:\,\nu_a<0}\,
(\ts x_{a1}\ldots x_{an}\ts)
\eeq
where the factors corresponding to the indices $a$
with $\nu_a<0$ are ordered from left to right
as the indices increase. Then 
$$
R_{\ts\sign}:\G_{mn}^{\ts|\nu|}\to\G_{mn}^{\ts\nup}
$$
and moreover by \eqref{rs} for any operator $Y\in\D_{mn}$ 
we have the relation
\beq
\label{rsep}
R_{\ts\sign}\ts(\ts Y(x))=S_\sign(Y)(R_{\ts\sign}(x))\,.
\eeq
Subsection~\ref{sec:pf-mod} shows that
an isomorphism \eqref{4.5.6} can be defined by mapping~any  
$w\in\Phi_\mu^\la$~to
$$
\al_{\nup}^{\ts-1}(R_{\ts\sign}(\al_{|\nu|}(w)))
\otimes
\bigl(\,\underset{a\,:\,\nu_a<0}{\otimes}\,v_n\,\bigr)
$$
where according to \eqref{vd}
$$
v_n=e_1\sw e_n\in\Delta'_{\ts\mu_a}\,.
$$

Similarly, we can define an isomorphism of $\YY$-modules
$$
J^{\,\si_0(\la)}_{\si_0(\mu)}:\,
\Phi^{\ts\si_0(\la)}_{\ts\si_0(\mu)}
\,\to
\Phi^{\ts\si_0(\lap)}_{\ts\si_0(\mu)}
\otimes
\bigl(\,\underset{a\,:\,\nu_a<0}{\otimes}\,\Delta'_{\ts\mu_a}\,\bigr)
\,.
$$
Note that in the latter definition the order of the tensor factors 
$\Delta'_{\ts\mu_a}$ is chosen to be the same as in \eqref{4.5.6}
by using Proposition~\ref{prop-mod}(iv). But the $\YY$-module
$\Phi_\mu^{\ts\lap}$ is polynomial~while~the weights
$\lap$~and~$\nup$ satisfy the conditions of Proposition~\ref{prop-i}.
We get an intertwining operator
$$
I_\mu^{\ts\lap}\,:
\Phi_\mu^{\ts\lap}
\to
\Phi^{\ts\si_0(\lap)}_{\ts\si_0(\mu)}\,.
$$
Take the composition
\beq
\label{composit}
(J_\mu^{\ts\lap}\ts)^{-1}\,
(\ts I_\mu^{\ts\lap}\otimes\Id\ts)\,
J^{\ts\la}_\mu:\,
\Phi_\mu^\la\to
\Phi^{\ts\si_0(\la)}_{\ts\si_0(\mu)}\,.
\eeq
By Proposition~\ref{prop-i} the composite operator
has the properties (i-iv) from Theorem~\ref{prop-ri}.
Put
$$
K\ =\sum_{a<b\,:\,\nu_a<0}\,\nu_a\,\nu_b
\qquad\text{and}\qquad
L\ =\sum_{a>b\,:\,\nu_a<0}\,\nu_a\,\nu_b\,.
$$
Then by the definitions \eqref{N},\eqref{Nb} and \eqref{rep} we 
have the operator relation 
$$
R_{\ts\si_0(\sign)}\,\si_0=(-1)^{\,K+L+N+\Nb}\ts\si_0\,R_\sign\,.
$$
By using the definitions of $J^\la_\mu$ and $J_\mu^{\ts\lap}$ 
along this relation, the operator \eqref{composit} equals
\begin{gather*}
(-1)^N\,
(\ts\al_{\si_0(\nup)}^{\ts-1}\,R_{\ts\si_0(\sign)}\,\al_{\si_0|\nu|})^{-1}
(\ts\al_{\sigma_0(\nup)}^{\,-1}\,\sigma_0\,Z^{\ts\lap}_\mu\,\al_{\nup})\,
(\ts\al_{\nup}^{\ts-1}\,R_{\ts\sign}\,\al_{|\nu|})=
\\[2pt]
(-1)^{\,K+L+\Nb}\,\al_{\si_0|\nu|}^{-1}\,\sigma_0\,
R_{\ts\sign}^{\ts-1}\ts Z^{\ts\lap}_\mu\,R_{\ts\sign}\,
\al_{|\nu|}=
(-1)^{\,K+L+\Nb}\,\al_{\si_0|\nu|}^{-1}\,\sigma_0\,
S_{\ts\sign}(Z^{\ts\lap}_\mu)\,\al_{|\nu|}=
(-1)^{\,K+L}\,
I_\mu^\la\,;
\end{gather*}
see also the definitions \eqref{i},\eqref{ri} and the relation \eqref{rsep}. 
Thus we have proved Theorem~\ref{prop-ri}.

Furthermore, put
$$
M\ =\sum_{a\,:\,\nu_a<0}\,\nu_a\ts(\nu_a-1)/2\,.
$$
Then by the definition \eqref{rep} we have the equality
$$
J^{\ts\la}_\mu\bigl(\vlm\ts\bigr)=(-1)^{\,K+M}\,
v^{\ts\lap}_{\ts\mu}
\otimes
\bigl(\,\underset{a\,:\,\nu_a<0}{\otimes}\,v_n\,\bigr)\,.
$$
Similarly, we have
$$
J^{\,\si_0(\la)}_{\si_0(\mu)}
\bigl(v_{\ts\sigma_0(\mu)}^{\ts\sigma_0(\la)}\ts\bigr)=(-1)^{\,L+M}\,\ts
v_{\ts\sigma_0(\mu)}^{\ts\sigma_0(\lap)}
\otimes
\bigl(\,\underset{a\,:\,\nu_a<0}{\otimes}\,v_n\,\bigr)\,.
$$
We also have
$$
I_\mu^{\ts\lap}\bigl(v^{\ts\lap}_{\ts\mu}\ts\bigr)=
v_{\ts\sigma_0(\mu)}^{\ts\sigma_0(\lap)}\,.
$$
The last three displayed equalities imply that
the operator $I_\mu^\la$ also has the property \eqref{ivlm}.


\section*{Acknowledgements}

\noindent
The first author has been 
supported by the RFBR grant 11-01-00962, 
the joint RFBR grant 11-01-92612-KO
with the Royal Society,
and by the joint grant 11-02-90453-Ukr.
The second author has been supported by the EPSRC grant EP/\ts I\ts014071.
The third author has been supported by
the NSF grant DGE\,-1106400, 
the FASI grant 14.740.11.0347,
the RFBR grant 12-01-33071-mol-a-ved, 
and by the joint RFBR-CNRS grant 11-01-93105-NCNIL\,-a.

\newpage



\end{document}